\def\x{\mathbf{x}}
\DeclareMathOperator{\Aut}{Aut}
\theoremstyle{plain}
\newtheorem{theorem}{Theorem}
\newtheorem{lemma}[theorem]{Lemma}
\newtheorem{proposition}[theorem]{Proposition}
\theoremstyle{definition}
\newtheorem{problem}[theorem]{Problem}
\title{Nut graphs with a given automorphism group}
\author[1,2,3]{Nino Ba{\v s}i{\'c}}
\author[4]{Patrick~W.~Fowler}
\affil[1]{FAMNIT, University of Primorska, Koper, Slovenia}
\affil[2]{IAM, University of Primorska, Koper, Slovenia}
\affil[3]{Institute of Mathematics, Physics and Mechanics, Ljubljana, Slovenia}
\affil[4]{Department of Chemistry, University of Sheffield, Sheffield S3 7HF, UK}
\begin{document}

\maketitle

\begin{abstract}
A \emph{nut graph} is a simple graph of order 2 or more for which the adjacency matrix has a 
single zero eigenvalue such that all non-zero kernel eigenvectors have no zero entry (i.e.\ are full). 
It is shown by construction
that every finite group can be represented as the group of automorphisms of
infinitely many nut graphs. It is further shown that such nut graphs exist even within the class of regular graphs;
the cases where the degree is $8, 12, 16, 20$ or $24$ are realised explicitly.

\vspace{\baselineskip}
\noindent
\textbf{Keywords:} Nut graph, graph automorphism, automorphism group, nullity, graph spectra, \linebreak f-universal.

\vspace{\baselineskip}
\noindent
\textbf{Math.\ Subj.\ Class.\ (2020):} 
05C25, % Graphs and abstract algebra (groups, rings, fields, etc.)
05C50. % Graphs and linear algebra (matrices, eigenvalues, etc.)
\end{abstract}

\section{Introduction}

A problem posed in K{\H o}nig's 1936 book on Graph Theory \cite[p.~5]{Konig1936} asks
when a given abstract group can be represented as the group of
automorphisms of a (finite) graph $G$, and when this is the case, how
the graph can be constructed\footnote{``Wann kann eine gegebene abstrakte Gruppe als die
Gruppe eines Graphen aufgefa\ss{}t werden und -- ist des der Fall --
wie kann die entsprechende Graph konstruoert werden?''}.
In response, Frucht first solved the problem in its original form~\cite{Frucht1939}.
Later, he showed that solution is still possible under the extra requirement that $G$ is a cubic graph~\cite{Frucht1949}. In both cases he gave an explicit construction.
Sabidussi~\cite{Sabidussi1957} refined the question and proved that every group can be represented by a graph 
with additional properties such as: prescribed chromatic number, prescribed vertex-connectivity, or regularity with prescribed degree.
An early survey paper by Babai~\cite{Babai1981} reviews 
this research direction and defines the term \emph{f-universal}: a class of graphs $\mathcal{C}$ is f-universal if 
for every finite group $\mathfrak{G}$ there exists a graph $G \in \mathcal{C}$ such that $\Aut(G) \cong \mathfrak{G}$.
Not all famous graph classes are f-universal; for example, Babai has shown that there are infinitely many finite groups which
cannot be realised by a \emph{planar} graph~\cite{Babai1972,Babai1975}.
K{\H o}nig's question has also been extended from graphs to other combinatorial objects, 
such as tournaments \cite{Moon1964}, Steiner triple and quadruple systems \cite{Mendelsohn1978}, and cycle systems \cite{Grannell2013,Lovegrove2014}.
Here, we consider K{\H o}nig's original question, but for \emph{nut graphs}.

A nut graph is a singular simple graph with \emph{nullity} 1, where the non-trivial kernel eigenvector 
has only non-zero entries. Nut graphs occur in several chemical applications \cite{nuttybook}: they are
connected, leafless and non-bipartite~\cite{ScirihaGutman-NutExt}. Catalogues have been constructed \cite{nutgen-site, CoolFowlGoed-2017,hog,HoG2}: nut graphs may be
regular, vertex-transitive \cite{Jan2020, basic2021regular} (including GRRs \cite{WatkinsGRR1,WatkinsGRR2} and non-Cayley graphs), but 
are not edge-transitive~\cite{basic2023vertex}; they may be chemical graphs \cite{Basic2020}, 
including some cubic polyhedra \cite{feuerpaper} and, in particular, fullerenes~\cite{ScFo2008}. Recently, a comprehensive theory of circulant nut graphs has been developed \cite{DamnjanStevan2022,Damnjanovic2022b,Damnjanovic2022c,Damnjanovic2022a}. The study of polycirculant nut graphs \cite{damnjanovic2023tricirc} has also been initiated.

Here, we prove a Sabidussi-type result for the class of nut graphs: we show that the graph $G$ that realises a given automorphism group
can be chosen to satisfy the requirements of the nut-graph definition; 
hence, nut graphs are f-universal (in the sense of \cite{Babai1981}). We prove that:

\begin{theorem}
\label{thm:1}
For every finite group $\mathfrak{G}$ there exist infinitely many finite nut graphs $G$, such that $\Aut(G) \cong \mathfrak{G}$.
\end{theorem}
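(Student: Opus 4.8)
The plan is to follow the Frucht--Sabidussi template --- first realise $\mathfrak{G}$ as the automorphism group of some auxiliary graph built from a Cayley (colour) graph, then upgrade that graph to a nut graph by an $\mathfrak{G}$-equivariant modification --- and to obtain infinitely many examples by feeding a tunable parameter into the construction. Concretely, I would begin with the Cayley colour digraph of $\mathfrak{G}$ on a generating set $\{s_1,\dots,s_k\}$ and replace the arc of colour $i$ by a small, rigid, asymmetric ``link'' graph $L_i$ that encodes the colour; the classical analysis then shows the resulting (uncoloured, undirected) graph $\Gamma$ satisfies $\Aut(\Gamma)\cong\mathfrak{G}$, with $\mathfrak{G}$ acting freely on the copies of the $L_i$ and on the vertices that came from group elements.

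The heart of the proof is to turn $\Gamma$ --- or a minor variant of it --- into a nut graph while keeping the automorphism group pinned down. Since a nut graph is leafless, connected and non-bipartite, the link gadgets cannot be attached by pendant edges; instead each $L_i$ must be inserted so that every vertex keeps degree $\ge 2$, and the design of $L_i$ must be coordinated with a global ``nut-ifying'' layer $N$, attached symmetrically (one copy per orbit, or one copy per group-element vertex), so that the whole assembled graph $G$ acquires nullity exactly $1$ with a full kernel eigenvector. The verification is a block linear-algebra computation: writing the adjacency matrix of $G$ in blocks indexed by group-element vertices, link gadgets and the nut layer, I would solve $Ax=0$ by eliminating the gadget and layer coordinates in terms of the group-element coordinates, reducing to a ``kernel equation'' over the group algebra $\ZZ\mathfrak{G}$ (a circulant-type system), which must be shown to have a one-dimensional solution space whose unique (up to scaling) solution has all entries non-zero. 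The gadgets $L_i$ and the layer $N$ have to be chosen so that (i) no $L_i$ and no copy of $N$ is itself singular in a way that contributes a spurious kernel vector, which would force nullity $>1$; and (ii) the entries of the resulting kernel vector at the gadget and layer vertices are forced away from $0$ by the interface equations. A convenient way to gain room is to arrange that $\Gamma$ contains an odd cycle through the group-element vertices (guaranteeing non-bipartiteness) and to carry out the kernel computation over $\ZZ\mathfrak{G}$.

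Once one nut graph $G$ with $\Aut(G)\cong\mathfrak{G}$ is produced, infinitely many follow by exploiting a free parameter: either the nut layer $N$ (or the links $L_i$) is replaced by an arbitrarily long rigid ``extension'' that preserves both asymmetry and the nut property, or one iterates a known nut-graph-preserving operation (a controlled vertex-expansion or bridging that multiplies kernel vectors and fixes the automorphism group) to get an unbounded increasing sequence of orders. In either case the automorphism-group argument is unchanged, because the added structure is rigid and $\mathfrak{G}$-equivariant, so every automorphism of $G$ must permute the gadgets and hence restrict to an automorphism of the Cayley colour graph.

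The step I expect to be the main obstacle is exactly the simultaneous satisfaction of the two requirements in the second paragraph: asymmetry-forcing gadgets are routine and nut-graph families are known, but inserting rigid gadgets without creating extra kernel vectors or a zero entry in the kernel vector is delicate, and this is what typically forces a careful, possibly case-distinguished, choice of building blocks together with an exact rank and counting argument on the block adjacency matrix. A secondary technical point is uniformity over all finite groups --- small groups, abelian groups, and groups with no GRR --- which I would handle by working with Cayley colour graphs and sufficiently many distinct gadget types rather than with GRRs, so that no group has to be treated in an ad hoc way.
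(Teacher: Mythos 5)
Your plan correctly identifies the two tasks (pin down the automorphism group; force the nut property) and correctly flags the second as the hard one, but as written it leaves that second task unsolved: everything hinges on choosing link gadgets $L_i$ and a layer $N$ so that the assembled graph has nullity exactly $1$ with a full kernel eigenvector, and you offer no candidate gadgets and no argument that such a choice exists uniformly over all finite groups. The block elimination you describe reduces the kernel equation to a perturbed convolution operator over the group algebra, and the nullity of such an operator depends on which irreducible representations of $\mathfrak{G}$ contribute a zero eigenvalue --- this varies with the group (it is exactly the delicate point in the theory of circulant and vertex-transitive nut graphs), so there is no reason a single gadget design works for every $\mathfrak{G}$. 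In short, the proposal is a programme rather than a proof: the step you yourself call ``the main obstacle'' is the theorem.

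The missing idea --- and the way the paper's proof avoids your obstacle entirely --- is to assemble the graph exclusively from operations already known to output nut graphs, so that no spectral computation is ever performed. Concretely: take any $4$-regular $H$ with $\Aut(H)\cong\mathfrak{G}$ (Sabidussi, Theorem~\ref{thm:sabidussi}, plus a small asymmetric example for the trivial group); apply the triangle-multiplier $\mathcal{M}_3$, which turns any connected $(2t)$-regular graph into a nut graph (Proposition~\ref{prop:multiplier3}) and whose extra automorphisms are completely catalogued (Proposition~\ref{prop:multiConstrSym}); kill those extra automorphisms by coalescing a small rigid nut-graph gadget at suitably chosen vertices, using the fact that a coalescence of two nut graphs is again a nut graph (Lemma~\ref{lem:coalescence}); and obtain infinitely many examples via the $4$-fold edge subdivision, which also preserves the nut property. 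Every nullity and full-kernel claim is outsourced to these closure properties, and the only thing verified by hand is elementary automorphism bookkeeping. To rescue your Cayley-colour-graph route you would need to replace the ad hoc ``nut-ifying layer'' with closure properties of this kind; carrying out the group-algebra kernel computation directly for arbitrary $\mathfrak{G}$ is substantially harder than the result you are trying to prove.
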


\noindent
Furthermore, we can require that the graph $G$ is also regular.

\begin{theorem}
\label{thm:2}
For every finite group $\mathfrak{G}$ and $d \in \{8, 12, 16, 20, 24\}$ there exist infinitely many finite $d$-regular nut graphs $G$, such that $\Aut(G) \cong \mathfrak{G}$.
\end{theorem}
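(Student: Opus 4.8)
The plan is to convert the nut graph furnished by Theorem~\ref{thm:1} into a $d$-regular one by grafting onto it many copies of a fixed building-block graph, the grafting being arranged so that every degree becomes $d$, the automorphism group is unchanged, and --- the crucial point --- the nut property is preserved. The set $\{8,12,16,20,24\}$ enters as exactly the degrees $d$ for which we can exhibit a building block with the spectral features needed below.

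In detail, I would proceed as follows. Step~1: for each $d\in\{8,12,16,20,24\}$ fix, by explicit construction (as promised in the abstract), a $d$-regular nut graph $B_d$ together with a distinguished edge $e_d=b_1b_2$ such that $B_d-e_d$ is asymmetric with $b_1,b_2$ not interchangeable, and such that the values of the kernel eigenvector of $B_d$ at $b_1$ and $b_2$ are under control. Step~2: invoke Theorem~\ref{thm:1}, inspecting its proof, to obtain a nut graph $H$ with $\Aut(H)\cong\mathfrak G$ that is drawn from an infinite family and has all degrees at most $d$; the infinitude of $H$ gives the ``infinitely many'' clause for free. Step~3: after a harmless local preprocessing of $H$ to control degree parities, graft at each vertex $v$ exactly $(d-\deg_H(v))/2$ copies of $B_d$, each copy attached by deleting $e_d$ and joining both $b_1$ and $b_2$ to $v$; when several copies sit at one vertex, link them in a rigid chain so that they cannot be permuted. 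Every vertex of the resulting graph $G$ then has degree exactly $d$, and by letting vertices in distinct $\Aut(H)$-orbits carry combinatorially distinguishable decorations the standard rigidity argument gives $\Aut(G)=\Aut(H)\cong\mathfrak G$. Regularity and the automorphism count are thus only bookkeeping.

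The main obstacle is to show that $G$ is again a nut graph. For this I would isolate a gluing lemma: if $K$ is a nut graph and one copy of $B_d$ is grafted onto a vertex of $K$ as above, then, provided a single scalar Schur-complement condition holds --- and that condition is exactly what the choice of $e_d$ and the control on the eigenvector values at $b_1,b_2$ are designed to force --- the resulting graph is again a nut graph. One then applies this lemma once per grafted block, absorbing the blocks one at a time into a growing nut graph that starts from the preprocessed $H$, checking at each stage that the nullity stays equal to $1$ and that the kernel eigenvector stays full; the convenient device for the latter is the standard characterization that a graph of nullity $1$ is a nut graph if and only if every one of its vertex-deleted subgraphs is non-singular. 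All the real difficulty sits in the spectral bookkeeping of this gluing lemma, and it is exactly here that the restriction $d\in\{8,12,16,20,24\}$ is felt, through the need for a block $B_d$ with the right eigenvector behaviour along $e_d$. Finally, running the construction over the infinitely many non-isomorphic choices of $H$ (or, alternatively, over enlarged blocks) produces infinitely many pairwise non-isomorphic $d$-regular nut graphs with automorphism group $\mathfrak G$, completing the proof.
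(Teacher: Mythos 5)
Your overall strategy --- decorate a graph that already realises $\mathfrak{G}$ with rigid nut-graph blocks so as to equalise all degrees --- is in the same spirit as the paper, but the operation you use to attach the blocks is different from the paper's, and it is exactly there that the proof has a genuine gap. Everything rests on your ``gluing lemma'': delete an edge $e_d=b_1b_2$ from a $d$-regular nut graph $B_d$ and join $b_1,b_2$ to a vertex $v$ of a nut graph $K$, claiming that nut-ness survives ``provided a single scalar Schur-complement condition holds.'' This is not a known construction, you do not prove it, and you do not exhibit any $B_d$ (for any of the five values of $d$) satisfying the side conditions; the appeal to the abstract for the existence of $B_d$ is close to circular. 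Unlike coalescence (Lemma~\ref{lem:coalescence}), which preserves nut-ness with \emph{no} hypotheses beyond both summands being nut graphs, your operation simultaneously deletes an edge inside the block and adds two new edges, and it is not clear that the nullity stays at $1$, let alone that the kernel vector stays full at $v$, at $b_1$, at $b_2$ and on the rest of $B_d$; the entire spectral content of the theorem is deferred to this unproved statement. The paper sidesteps all of this: it starts from a $(d/2)$-regular $H$ with $\Aut(H)\cong\mathfrak{G}$ (Theorem~\ref{thm:sabidussi}), applies the triangle-multiplier $\mathcal{M}_3$ (Proposition~\ref{prop:multiplier3}) to get a nut graph whose only non-degree-$d$ vertices are the degree-$2$ triangle tips, and then coalesces onto each tip a rigid nut-graph gadget with exactly one vertex of degree $d-2$, so that regularity, nullity and fullness all follow from the two quoted results with no new spectral lemma. (This also explains the restriction $d\equiv 0\pmod 4$, which your scheme does not.)

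Two further concrete problems. First, your device for preventing the blocks at a single vertex from being permuted --- ``link them in a rigid chain'' --- adds edges between blocks and therefore pushes the endpoints of those linking edges above degree $d$, destroying the regularity you are trying to achieve; the paper instead uses pairwise non-isomorphic gadgets $Q_1,Q_2,\dots$, assigned so that the $d/4$ triangles at a vertex receive $\binom{s}{2}\geq d/4$ distinct unordered pairs, which breaks the unwanted symmetries without adding any edges. Second, the ``harmless local preprocessing to control degree parities'' is asserted, not performed: your grafting changes degrees only in steps of $2$, so you must verify that every vertex of the Theorem~\ref{thm:1} graph has degree of the same parity as $d$ and at most $d$ (this happens to hold for the specific graph constructed there, but you never check it), and any genuine preprocessing would itself have to be shown to preserve both the nut property and the automorphism group. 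As written, the argument establishes the theorem only modulo an unproved gluing lemma and an inconsistent rigidification step.
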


\section{Preliminaries}

All graphs considered in this paper are finite, simple and connected.
The adjacency matrix of graph $G$ is $\mathbf{A}(G)$
and the dimension of the nullspace of $\mathbf{A}(G)$
is the \emph{nullity},  $\eta(G)$.
An \emph{automorphism} $\alpha$ of a graph $G$ is a permutation $\alpha\colon V(G) \to V(G)$ of the vertices
of $G$ that maps edges to edges and non-edges to non-edges.
The set of all automorphisms of a graph $G$ forms a group,
the (\emph{full}) \emph{automorphism group} of $G$, denoted by $\Aut(G)$.
The image of a vertex $v \in V(G)$ under automorphism $\alpha$ will be denoted $v^\alpha$.
For other standard definitions we refer the reader to one of 
the many comprehensive treatments of the theory of graph spectra (e.g.~\cite{Cvetkovic1997,Cvetkovic2010,Chung1997,Cvetkovic1995,haemers}).
and algebraic graph theory (e.g.~\cite{Godsil2001,Dobson2022,Biggs}). 

\emph{Nut graphs} \cite{ScirihaGutman-NutExt} are graphs that have a one-dimensional nullspace (i.e., $\eta(G) = 1$), 
where the non-trivial kernel eigenvector $\x = [x_1\ \ldots\ x_n]^\intercal \in \ker \mathbf{A}(G)$ is full (i.e., $|x_i| > 0$ for all $i = 1, \ldots, n$). 
As the defining paper considered the isolated vertex 
to be a trivial case~\cite{ScirihaGutman-NutExt}, \emph{non-trivial}
nut graphs have seven or more vertices. 
If $G$ is a \emph{regular} nut graph, then $\delta(G) = d(G) = \Delta(G) \geq 3$.
Note that there are no nut graphs with $\Delta(G) = 2$, as no cycle has nullity $1$.

In what follows, it will be useful to have constructions that
are guaranteed to produce a nut graph, when applied to a starting graph of specified type. 
For example, let $G$ be a nut graph and $e \in E(G)$ an arbitrary edge. Then the graph obtained from $G$
by subdividing the edge $e$ four times is again a nut graph; this is the \emph{subdivision construction}~\cite{ScirihaGutman-NutExt}.
Two further constructions that will prove useful in what follows are now described.

The first is the coalescence construction:
Let $G_1$ and $G_2$ be graphs and let $v_1 \in V(G_1)$ and $v_2 \in V(G_2)$. The \emph{coalescence of $(G_1, v_1)$ and $(G_2, v_2)$}, which we denote here as $(G_1, v_1) \odot (G_2, v_2)$,
is the graph obtained from the disjoint union of $G_1$ and $G_2$ by identifying root vertices $v_1$ and $v_2$.
Sciriha obtained the following result~\cite[Corollary 21]{Sciriha2008}.

\begin{lemma}[\cite{Sciriha2008}]
\label{lem:coalescence}
Let $G_1$ and $G_2$ be nut graphs. Then the coalescence $(G_1, v_1) \odot (G_2, v_2)$ is a nut graph.
\end{lemma}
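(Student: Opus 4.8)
The plan is to produce an explicit full vector in $\ker\mathbf{A}(G)$, where $G := (G_1,v_1)\odot(G_2,v_2)$, by gluing the kernel eigenvectors of $G_1$ and $G_2$ along the root, and then to show that no further kernel vectors arise, so that $\eta(G)=1$.

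First I would use that, since $G_i$ is a nut graph, $\ker\mathbf{A}(G_i)$ is spanned by a full vector $\x^{(i)}$; in particular $x^{(i)}_{v_i}\neq 0$, so after rescaling I may assume $x^{(1)}_{v_1}=x^{(2)}_{v_2}=1$. Writing $v$ for the identified vertex of $G$, define $\z\in\mathbb{R}^{V(G)}$ by $z_w=x^{(1)}_w$ for $w\in V(G_1)$ and $z_w=x^{(2)}_w$ for $w\in V(G_2)$ (the two prescriptions agree at $w=v$). Then $\z\in\ker\mathbf{A}(G)$: every $w\in V(G_1)\setminus\{v_1\}$ has all of its $G$-neighbours inside $G_1$, so the $w$-entry of $\mathbf{A}(G)\z$ is $(\mathbf{A}(G_1)\x^{(1)})_w=0$; symmetrically for $w\in V(G_2)\setminus\{v_2\}$; and at $v$ the neighbourhood is $N_{G_1}(v_1)\sqcup N_{G_2}(v_2)$, so the $v$-entry is $(\mathbf{A}(G_1)\x^{(1)})_{v_1}+(\mathbf{A}(G_2)\x^{(2)})_{v_2}=0$. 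As $\x^{(1)}$ and $\x^{(2)}$ are full, so is $\z$.

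Next I would take an arbitrary $\y\in\ker\mathbf{A}(G)$ and restrict it to the two sides, $\y_i:=\y|_{V(G_i)}$ (agreeing at $v$). The same neighbourhood bookkeeping gives $(\mathbf{A}(G_1)\y_1)_w=0$ for all $w\neq v_1$, $(\mathbf{A}(G_2)\y_2)_w=0$ for all $w\neq v_2$, and $(\mathbf{A}(G_1)\y_1)_{v_1}+(\mathbf{A}(G_2)\y_2)_{v_2}=0$; hence $\mathbf{A}(G_1)\y_1=c\,\mathbf{e}_{v_1}$ and $\mathbf{A}(G_2)\y_2=-c\,\mathbf{e}_{v_2}$ for a single scalar $c$, where $\mathbf{e}_{v_i}$ denotes the corresponding standard basis vector.

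The hard part will be to exclude $c\neq 0$. If $c\neq 0$ then $\mathbf{e}_{v_1}\in\operatorname{im}\mathbf{A}(G_1)=\bigl(\ker\mathbf{A}(G_1)\bigr)^{\perp}$ (using symmetry of $\mathbf{A}(G_1)$), so $x^{(1)}_{v_1}=\langle\mathbf{e}_{v_1},\x^{(1)}\rangle=0$, contradicting the nut property of $G_1$. Hence $c=0$, so $\y_i\in\ker\mathbf{A}(G_i)$, i.e.\ $\y_i=\lambda_i\x^{(i)}$; matching at $v$ forces $\lambda_1=\lambda_1 x^{(1)}_{v_1}=\lambda_2 x^{(2)}_{v_2}=\lambda_2$, whence $\y=\lambda_1\z$. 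Thus $\ker\mathbf{A}(G)$ is the one-dimensional span of the full vector $\z$, and since $|V(G)|\ge 2$ the coalescence $G$ is a nut graph. It is worth noting that the nut hypothesis on the $G_i$ is used twice and in the same guise — namely that the root entry of the kernel eigenvector is nonzero — once to make $\z$ full and once to force $c=0$.
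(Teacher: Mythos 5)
Your proof is correct, but there is nothing in this paper to compare it against: the lemma is imported verbatim from Sciriha (Corollary~21 of the cited work) and the present paper gives no proof of it. Your argument is the standard direct one and is complete: gluing the normalised full kernel vectors $\x^{(1)},\x^{(2)}$ produces a full vector $\z\in\ker\mathbf{A}(G)$; an arbitrary kernel vector restricts to $\y_i$ with $\mathbf{A}(G_1)\y_1=c\,\mathbf{e}_{v_1}$ and $\mathbf{A}(G_2)\y_2=-c\,\mathbf{e}_{v_2}$; and the case $c\neq 0$ is excluded because $\mathbf{e}_{v_1}\in\operatorname{im}\mathbf{A}(G_1)=\bigl(\ker\mathbf{A}(G_1)\bigr)^{\perp}$ would force $x^{(1)}_{v_1}=0$, contradicting fullness. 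All the bookkeeping (disjointness of $N_{G_1}(v_1)$ and $N_{G_2}(v_2)$ in $G$, agreement of the two prescriptions at the identified vertex, the matching of the scalars $\lambda_1=\lambda_2$) checks out. The only difference from the source is one of framing: Sciriha derives the statement as a corollary of more general results on coalescences at core vertices of singular graphs, whereas your argument is self-contained and uses the nut hypothesis only through the single fact that the root entry of the kernel eigenvector is nonzero --- which, as you observe, is exactly what is needed both to make $\z$ full and to force $c=0$. That self-containedness is a genuine advantage for a reader of this paper, at the cost of not exposing the more general nullity formula for coalescences that Sciriha's route provides.
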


\noindent
The coalescence construction must be provided with an initial collection of nut graphs.
The second construction is different in that it produces a nut graph from any $(2t)$-regular graph.

\begin{proposition}[\cite{basic2023vertex}]
\label{prop:multiplier3}
Let $G$ be a connected $(2t)$-regular graph, where $t \geq 1$. Let $\mathcal{M}_3(G)$ be the graph obtained from $G$ by
fusing a bouquet of $t$ triangles to every vertex of $G$. Then $\mathcal{M}_3(G)$ is a nut graph.
\end{proposition}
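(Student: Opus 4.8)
The plan is to analyse the kernel of $\mathbf{A}(\mathcal{M}_3(G))$ directly, exploiting the rigid local structure of the attached triangles. Write $V(G) = \{v_1, \dots, v_n\}$, and for each $i$ let $a_{i,1}, b_{i,1}, \dots, a_{i,t}, b_{i,t}$ be the $2t$ new vertices, where $\{v_i, a_{i,j}, b_{i,j}\}$ spans the $j$-th triangle of the bouquet at $v_i$. Fix $\x \in \ker \mathbf{A}(\mathcal{M}_3(G))$ and denote by $x_i$, $\alpha_{i,j}$, $\beta_{i,j}$ the entries of $\x$ at $v_i$, $a_{i,j}$, $b_{i,j}$ respectively. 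First I would read off the kernel equations at the two triangle vertices $a_{i,j}$ and $b_{i,j}$: each is adjacent only to $v_i$ and to its triangle partner, so these equations force $\alpha_{i,j} = \beta_{i,j} = -x_i$ for every $i,j$.

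Substituting this into the equation at $v_i$ — whose neighbours are its $2t$ neighbours in $G$ together with the $2t$ triangle vertices $a_{i,j}, b_{i,j}$ — collapses the $2t$ triangle terms to $-2t\,x_i$, so the restriction $\x_V := (x_1, \dots, x_n)$ satisfies $\mathbf{A}(G)\,\x_V = 2t\,\x_V$. Hence every kernel vector of $\mathcal{M}_3(G)$ is obtained, via the explicit formula above, from an eigenvector of $\mathbf{A}(G)$ for the eigenvalue $2t$ (possibly the zero vector); conversely, the same formula turns any such eigenvector into a genuine element of $\ker \mathbf{A}(\mathcal{M}_3(G))$, so the correspondence is a linear isomorphism between the two spaces. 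It then remains to invoke the Perron--Frobenius theorem: since $G$ is connected and $(2t)$-regular, $2t$ is its largest eigenvalue, it is simple, and its eigenspace is spanned by the all-ones vector $\mathbf{1}$. Therefore $\ker \mathbf{A}(\mathcal{M}_3(G))$ is one-dimensional, spanned by the vector taking value $1$ on each $v_i$ and $-1$ on each $a_{i,j}, b_{i,j}$, which has no zero entry. Noting that $G$ has at least $2t+1 \ge 3$ vertices (so $\mathcal{M}_3(G)$ has at least $3(2t+1) \ge 9$) and that $\mathcal{M}_3(G)$ is connected completes the check that it is a nut graph.

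I do not expect a genuine obstacle here: the only points needing care are (i) confirming that the candidate vector really does lie in the kernel, so that the dimension count yields exactly $1$ and not $0$, and (ii) the appeal to simplicity of the Perron eigenvalue, which requires $G$ to be connected — both routine. The structural idea worth isolating is that, at the level of the kernel, fusing a triangle at a vertex behaves exactly like adding $+2$ to the diagonal entry there; consequently a bouquet of $t$ triangles at every vertex of a $(2t)$-regular $G$ converts the nullspace problem on $\mathcal{M}_3(G)$ into the $(2t)$-eigenspace problem on $G$, which regularity and connectedness resolve immediately.
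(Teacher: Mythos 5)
Your proof is correct. Note that this paper does not actually prove Proposition~\ref{prop:multiplier3} --- it imports it from \cite{basic2023vertex} --- but your argument (forcing $\alpha_{i,j}=\beta_{i,j}=-x_i$ from the degree-$2$ triangle vertices, reducing the kernel of $\mathcal{M}_3(G)$ to the $2t$-eigenspace of $\mathbf{A}(G)$, and invoking Perron--Frobenius for the connected regular graph $G$) is exactly the natural route and establishes both nullity $1$ and fullness of the kernel vector. The only blemish is the sign in your closing aside: fusing a triangle at a vertex acts like adding $-2$ (not $+2$) to the diagonal entry there, since the reduced system is $(\mathbf{A}(G)-2tI)\x_V=0$; your actual computation in the body has the correct sign.
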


\noindent
The construction $\mathcal{M}_3(G)$ is called the \emph{triangle-multiplier construction}~\cite{basic2023vertex}.
The choice of name is justified by the fact that $|V(\mathcal{M}_3(G))| = (2t+1)|V(G)|$. Its effect on the automorphism
group is described by the following proposition.

\begin{proposition}[\cite{basic2023vertex}]
\label{prop:multiConstrSym}
Let $G$ be a connected $(2t)$-regular graph, where $t \geq 1$. 
Then $\Aut(G) \leq \Aut(\mathcal{M}_3(G))$  and $|\Aut(\mathcal{M}_3(G))| = (2^t t!)^{|V(G)|} |\Aut(G)|$.
\end{proposition}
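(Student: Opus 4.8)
The plan is to identify $\Aut(\mathcal{M}_3(G))$ as a group extension of $\Aut(G)$ by a direct product of \emph{local} bouquet symmetries. Throughout, write the bouquet fused at a vertex $v \in V(G)$ as the $2t$ new vertices $a^v_1, b^v_1, \dots, a^v_t, b^v_t$ together with the edges $v a^v_i$, $v b^v_i$ and $a^v_i b^v_i$ for $1 \le i \le t$; note that the construction adds no edge inside $V(G)$ and no edge between bouquets rooted at distinct vertices. First I would record the structural facts that drive everything. In $\mathcal{M}_3(G)$ each new vertex has degree $2$, whereas each vertex of $V(G)$ has degree $2t + 2t = 4t \ge 4$; hence the degree-$2$ vertices of $\mathcal{M}_3(G)$ are exactly the new vertices, the subgraph they induce is the disjoint union of the edges $a^v_i b^v_i$, and the two endpoints of such an edge have a unique common neighbour, namely $v$. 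In particular, the attachment point of each bouquet is recoverable from its leaves.

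Next I would introduce the restriction homomorphism $\rho \colon \Aut(\mathcal{M}_3(G)) \to \Aut(G)$. Because an automorphism preserves degrees, every $\beta \in \Aut(\mathcal{M}_3(G))$ permutes the degree-$2$ vertices and therefore maps $V(G)$ onto itself; since a pair of vertices of $V(G)$ is adjacent in $\mathcal{M}_3(G)$ if and only if it is adjacent in $G$, the restriction $\rho(\beta) := \beta|_{V(G)}$ lies in $\Aut(G)$, and $\rho$ is a group homomorphism. To see that $\rho$ is surjective, and at the same time to obtain the embedding $\Aut(G) \le \Aut(\mathcal{M}_3(G))$, I would check that the label-preserving extension $\widehat{\alpha}$ of $\alpha \in \Aut(G)$, given by $v \mapsto v^\alpha$, $a^v_i \mapsto a^{v^\alpha}_i$ and $b^v_i \mapsto b^{v^\alpha}_i$, is an automorphism of $\mathcal{M}_3(G)$ with $\rho(\widehat{\alpha}) = \alpha$, and that $\alpha \mapsto \widehat{\alpha}$ is a homomorphism; being a homomorphism with a left inverse, it is injective, and its image is a subgroup of $\Aut(\mathcal{M}_3(G))$ isomorphic to $\Aut(G)$.

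The remaining ingredient is $\ker \rho$. If $\beta$ fixes every vertex of $V(G)$, then for each $v$ it sends the edge $a^v_i b^v_i$, an edge between two degree-$2$ vertices, to another such edge, whose unique common neighbour must be $v^\beta = v$; hence the image is $a^v_j b^v_j$ for some $j$, so $\beta$ stabilises the bouquet at $v$ and acts on it as an automorphism of a bouquet of $t$ triangles that fixes the centre. The group of such automorphisms is $\mathbb{Z}_2 \wr S_t$, of order $2^t t!$: one may permute the $t$ triangles and, within each triangle, swap its two leaves or not. Since bouquets rooted at distinct vertices are vertex-disjoint apart from their (fixed) roots and are joined by no edge, these local actions are mutually independent, so $\ker \rho \cong (\mathbb{Z}_2 \wr S_t)^{|V(G)|}$ and $|\ker \rho| = (2^t t!)^{|V(G)|}$. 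Feeding this and the surjectivity of $\rho$ into the first isomorphism theorem gives $|\Aut(\mathcal{M}_3(G))| = |\ker \rho| \cdot |\Aut(G)| = (2^t t!)^{|V(G)|} |\Aut(G)|$.

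I expect the only genuinely delicate point to be the structural claim that an arbitrary automorphism of $\mathcal{M}_3(G)$ respects the decomposition of the graph into $G$ together with its attached bouquets; everything afterwards is bookkeeping with permutations. That claim is forced by the degree count --- which is precisely where the hypothesis $t \ge 1$ is used, ensuring $4t > 2$ so that the new vertices are exactly the degree-$2$ vertices --- together with the observation that each leaf-edge $a^v_i b^v_i$ has the single common neighbour $v$, so that the root of each bouquet, and hence the copy of $G$ embedded in $\mathcal{M}_3(G)$, is canonically determined.
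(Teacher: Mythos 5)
Your argument is correct and complete: the degree count isolates $V(G)$, the restriction homomorphism $\rho$ splits via $\alpha\mapsto\widehat{\alpha}$, and the kernel is the product of the local bouquet groups $\mathbb{Z}_2\wr S_t$ of order $2^t t!$. The paper itself cites this proposition from an earlier reference and gives only an informal description of the extra automorphisms (leaf swaps and triangle permutations at each vertex), which is exactly the decomposition your proof formalises.
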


\noindent
The group $\Aut(G)$ also acts on $\mathcal{M}_3(G)$. The additional automorphisms in $\Aut(\mathcal{M}_3(G))$ are well-understood. 
They arise from swapping the two degree-$2$ endvertices of the attached triangles and from permuting
the triangles attached to a given vertex of graph~$G$. 

As mentioned above, Sabidussi showed for a range of properties, that they can be required 
of the graph that realises a given finite group. Theorem 3.7 in~\cite{Sabidussi1957} is more general than we need here;
in a version tailored for our purposes, it is:

\begin{theorem}[\cite{Sabidussi1957}]
\label{thm:sabidussi}
For every finite group $\mathfrak{G}$ of order $|\mathfrak{G}| > 1$ and $d \geq 3$ there exist infinitely many connected $d$-regular graphs $G$, such that $\Aut(G) \cong \mathfrak{G}$.
\end{theorem}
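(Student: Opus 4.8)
The plan is to follow the classical route through Cayley colour graphs and to handle the regularity requirement by attaching rigid fillers. First I would fix a nonempty generating (multi)set $S = \{s_1,\dots,s_k\}$ of $\mathfrak{G}$, which exists because $|\mathfrak{G}| > 1$, and form the Cayley colour digraph $C = \Cay(\mathfrak{G},S)$ whose vertices are the elements of $\mathfrak{G}$ and whose arcs $g \to gs_i$ are coloured by $i$. The fact underlying Frucht's theorem is that the group of permutations of $V(C)$ preserving every arc colour and its orientation is exactly the left-regular representation of $\mathfrak{G}$, hence isomorphic to $\mathfrak{G}$. The whole task is then to convert $C$ into an ordinary (uncoloured, undirected), connected, $d$-regular graph $G$ whose full automorphism group is still $\mathfrak{G}$.

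Next I would replace each group element $g$ by a connected rigid \emph{vertex gadget} $Q$ (one fixed rigid graph, the same for every $g$) equipped with $2k$ distinguished \emph{ports}, one per incident arc, arranged so that the $k$ out-ports are distinguishable from the $k$ in-ports and ports of distinct colours are mutually distinguishable. Corresponding ports are then joined by \emph{arc gadgets} $P_i$, one rigid asymmetric gadget per colour, which encode colour and orientation in their shape. Introducing the vertex gadget is the device that decouples the attained degrees from the number of generators $k$, so that a prescribed small $d$ can be reached even when $\mathfrak{G}$ needs many generators. Because each copy of $Q$ is rigid with distinguishable ports and the arc gadgets are pairwise non-isomorphic, every automorphism of the resulting graph permutes the blocks $Q$ and the arc gadgets while respecting colour and orientation; such a permutation is precisely a colour- and orientation-preserving automorphism of $C$, and conversely each element of $\mathfrak{G}$ induces one. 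This yields $\Aut(G) \cong \mathfrak{G}$.

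The crux is regularity. Prescribe the target degree $d \ge 3$ and arrange every internal vertex of $Q$ and of each $P_i$ to have degree $d$ already, so that the only deficiency sits at a few controlled \emph{attachment spots}. Each such spot is then brought up to degree $d$ by gluing on a rigid \emph{filler}: a rooted graph $R$ all of whose non-root vertices have degree $d$ and whose root supplies exactly the missing increment. The main obstacle here, which I expect to be the principal technical burden, is a handshake-parity constraint: a single-rooted filler whose root has degree $j$ exists only when $j$ plus $d$ times the number of its internal vertices is even, and this can fail when $d$ is even while the required increment is odd. I would resolve this by controlling the parity of the deficiencies through the design of $Q$, and, in any residual odd case, by replacing independent pendant fillers by a filler carrying two attachment stubs that joins a matched pair of deficient spots, thereby absorbing the odd increments in pairs. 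The fillers are taken rigid and identical within each orbit of $\mathfrak{G}$, which forces $d$-regularity while preventing the automorphism group from growing.

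Finally, to obtain \emph{infinitely many} such graphs I would index the arc gadgets and fillers by a scale parameter $m$, for instance the number of neutral rigid $d$-regular blocks spliced into each gadget, chosen so that increasing $m$ strictly increases $|V(G)|$ while preserving both $d$-regularity and the isomorphism $\Aut(G) \cong \mathfrak{G}$. Distinct orders give pairwise non-isomorphic graphs, producing the required infinite family and completing the argument; the gadget bookkeeping of the first two paragraphs is the standard Frucht--Sabidussi machinery, and only the degree-parity analysis of the third paragraph requires genuine care.
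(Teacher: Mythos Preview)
This theorem is not proved in the paper: it is quoted from Sabidussi's 1957 article and used as a black box (the paper explicitly says ``Theorem~3.7 in~\cite{Sabidussi1957} is more general than we need here; in a version tailored for our purposes, it is:'' and then states the result without argument). There is therefore no in-paper proof to compare your proposal against.

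That said, your outline is essentially the classical Frucht--Sabidussi construction itself: start from a Cayley colour digraph, replace group elements and arcs by rigid gadgets that encode colour and orientation, and pad to the target degree with rigid fillers. The structure is right, and you have correctly identified the one genuinely delicate point, namely the handshake-parity obstruction when $d$ is even and a single attachment spot needs an odd increment. Your proposed remedy (pairing deficient spots via two-rooted fillers, or engineering the deficiencies to be even from the outset) is the standard way out. One point you glide over is the \emph{existence} of the required rigid ingredients: for each $d\ge 3$ you need asymmetric connected $d$-regular graphs (for the body of the fillers and gadgets), and indeed infinitely many of them to run the scaling argument; this is true but deserves a citation or a line of justification. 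A second point worth tightening is the claim that every automorphism of $G$ must permute the copies of $Q$ as blocks: this requires that the gadgets be designed so that their boundaries are recognisable in $G$ (for instance via a local invariant such as a unique configuration at each port), otherwise an automorphism could in principle cut across gadget boundaries. With those two details supplied, the sketch would constitute a faithful reconstruction of Sabidussi's proof.
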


\noindent
The theorem of Sabidussi requires the group $\mathfrak{G}$ to be non-trivial. However, as Bollob{\'a}s has shown, a consequence of \cite[Theorem 6]{Bollobas1982} is that for $d \geq 3$ 
almost every $d$-regular graph is asymmetric.
Thus it is easy to incorporate the trivial case into Theorem~\ref{thm:sabidussi} and the requirement $|\mathfrak{G}| > 1$ could be omitted.
Theorem~\ref{thm:sabidussi} is the jumping-off point for our proofs.

\section{Proof of Theorem~\ref{thm:1}}

We are now ready to prove the main theorem. We will exploit a combination of the triangle-multiplier and coalescence constructions.

\begin{proof}[Proof of Theorem~\ref{thm:1}]
If $|\mathfrak{G}| > 1$, then by Theorem~\ref{thm:sabidussi}, there exists a $4$-regular graph $H$, such that $\Aut(H) \cong \mathfrak{G}$.
In the case $|\mathfrak{G}| = 1$, simply take $H$ to be the graph from Figure~3(a), i.e.\ an asymmetric 4-regular graph of the minimum order.
By Proposition~\ref{prop:multiplier3}, the graph $\mathcal{M}_3(H)$ is a nut graph such that $\Aut(H) \leq \Aut(\mathcal{M}_3(H))$.
By Proposition~\ref{prop:multiConstrSym}, $|\Aut(\mathcal{M}_3(H))| = 8^{|V(H)|} |\Aut(H)|$. 

Let us denote $\kappa = |V(H)|$ and $V(H) = \{h_1, h_2, \ldots, h_\kappa \}$. By definition, $H \subset \mathcal{M}_3(H)$. Let the
extra vertices be denoted $t_i^{(j, k)}$ for $1 \leq i \leq \kappa$ and $j, k \in \{1, 2\}$ such that the new
neighbours of $h_i$ are $t_i^{(1, 1)}, t_i^{(1, 2)}, t_i^{(2, 1)}$ and $t_i^{(2, 2)}$. Moreover, $t_i^{(j, 1)}$ and $t_i^{(j, 2)}$ are adjacent; see Figure~\ref{fig:hMulti}.
\begin{figure}[!htbp]
\centering
\begin{tikzpicture}
\tikzstyle{vertex}=[draw,circle,font=\scriptsize,minimum size=4pt,inner sep=1pt,fill=black]
\tikzstyle{bvertex}=[draw,circle,font=\scriptsize,minimum size=4pt,inner sep=1pt,color=blue,fill=blue!40!white]
\tikzstyle{edge}=[draw,thick]
\draw[thick, color=green!60!black, fill=green!20!white] (1,0)  ellipse (2.5 and 1.2);
\node[vertex,label=0:$h_1$] (h1) at (-1, 0) {};
\node[vertex,label=-30:$h_2$] (h2) at (0.8, 0.7) {};
\node at (3.4, 0.9) {$H$};
\node[bvertex,label=180:$t_1^{(1, 1)}$] (t1_11) at (-1.6, -1.0) {};
\node[bvertex,label=180:$t_1^{(1, 2)}$] (t1_12) at (-1.8, -0.3) {};
\node[bvertex,label=180:$t_1^{(2, 1)}$] (t1_21) at (-1.8, 0.3) {};
\node[bvertex,label=180:$t_1^{(2, 2)}$] (t1_22) at (-1.6, 1.0) {};
\path[edge,color=blue!70!white] (h1) -- (t1_11) --  (t1_12) -- (h1);
\path[edge,color=blue!70!white] (h1) -- (t1_21) --  (t1_22) -- (h1);
\node[bvertex,label=180:$t_2^{(1, 1)}$] (t2_11) at (0.0, 1.4) {};
\node[bvertex,label=90:$t_2^{(1, 2)}$] (t2_12) at (0.6, 1.6) {};
\path[edge,color=blue!70!white] (h2) -- (t2_11) --  (t2_12) -- (h2);
\node[bvertex,label=90:$t_2^{(2, 1)}$] (t2_21) at (1.5, 1.6) {};
\node[bvertex,label=0:$t_2^{(2, 2)}$] (t2_22) at (2.0, 1.4) {};
\path[edge,color=blue!70!white] (h2) -- (t2_21) --  (t2_22) -- (h2);
\node[draw=none,fill=none] (dotz) at (0.6, -0.5) {$\cdots$};
\path[edge] (h1) -- ($ (h1) + (100:0.4) $);
\path[edge] (h1) -- ($ (h1) + (70:0.4) $);
\path[edge] (h1) -- ($ (h1) + (-40:0.4) $);
\path[edge] (h1) -- ($ (h1) + (-80:0.4) $);
\path[edge] (h2) -- ($ (h2) + (-10:0.4) $);
\path[edge] (h2) -- ($ (h2) + (-80:0.4) $);
\path[edge] (h2) -- ($ (h2) + (-140:0.4) $);
\path[edge] (h2) -- ($ (h2) + (-170:0.4) $);
\end{tikzpicture}
\caption{The graph $\mathcal{M}_3(H)$.}
\label{fig:hMulti}
\end{figure}
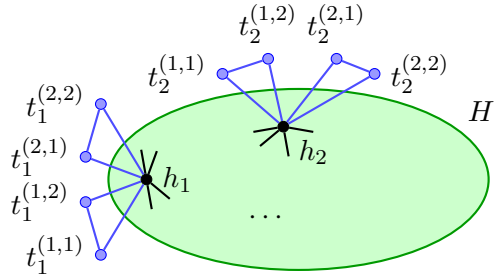

The automorphisms of $\mathcal{M}_3(H)$ are well-understood. Every $\alpha \in \Aut(H)$ is extended to an automorphism $\widehat{\alpha} \in \Aut(\mathcal{M}_3(H))$
by the following natural definition:
\begin{equation}
\widehat{\alpha}(v) = 
\begin{cases}
\alpha(v), & \text{ if } v \in V(H); \\
t_\ell^{(j, k)}, & \text{ if } v = t_i^{(j, k)} \text{ and } h_\ell = \alpha(h_i).
\end{cases}
\end{equation}
In addition to $\widehat{\alpha}$ for $\alpha \in \Aut(H)$, there are the following extra automorphisms in $ \Aut(\mathcal{M}_3(H))$:
\begin{align}
\beta_{i, j} & = ( t_i^{(j, 1)} \ t_i^{(j, 2)} ), \\
\gamma_i & = ( t_i^{(1, 1)} \ t_i^{(2, 1)} ) ( t_i^{(1, 2)} \ t_i^{(2, 2)} ),
\end{align}
for $i = 1, \ldots, \kappa$ and $j = 1, 2$.

We will remove the extra automorphisms by attaching `gadgets' to vertices $t_i^{(1, 1)}$ and $t_i^{(2, 1)}$ for $i = 1, \ldots, \kappa$. 
\begin{figure}[!htbp]
\centering
\begin{tikzpicture}
\tikzstyle{vertex}=[draw,circle,font=\scriptsize,minimum size=6pt,inner sep=1pt,fill=black]
\tikzstyle{bvertex}=[draw,circle,font=\scriptsize,minimum size=6pt,inner sep=1pt,color=blue,fill=blue!40!white]
\tikzstyle{edge}=[draw,thick]
\node[vertex,label=-90:$q_1$] (q1) at (-1, 0) {};
\node[vertex,label=-0:$q_2$] (q2) at (0.8, 0.7) {};
\node[bvertex] (v3) at (0.2, 1.4) {};
\node[bvertex] (v6) at (0.5, 2.0) {};
\node[bvertex] (v4) at (-1, 1.8) {};
\node[bvertex] (v1) at (-1.3, 1.1) {};
\node[bvertex] (v7) at (-0.7, 1.3) {};
\node[bvertex] (v2) at (-0.3, 0.7) {};
\path[edge] (v4) -- (v6) -- (q2) -- (q1) -- (v1) -- (v4);
\path[edge] (q1) -- (v7) -- (v6) -- (v3) -- (q2) -- (v7) -- (v2) -- (q1);
\end{tikzpicture}
\caption{The gadget graph $Q_0$. The root vertices used in the first and second attachment are labelled $q_1$ and $q_2$, respectively.}
\label{fig:gadget}
\end{figure}
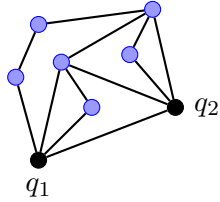
Consider the graph $Q_0$ in Figure~\ref{fig:gadget}. It is easy to verify that $Q_0$ is a nut graph of order~$8$ with $|\Aut(Q_0)| = 2$, and that vertices labelled $q_1$ and $q_2$
belong to different vertex orbits. Moreover, the respective stabilisers $\Aut(Q_0)_{q_1}$ and $\Aut(Q_0)_{q_2}$ are trivial. 

Let $G$ be the graph obtained from $\mathcal{M}_3(H)$ by a series of coalescence constructions. Start with $G_0 \coloneqq \mathcal{M}_3(H)$.
For $i=1, \ldots, \kappa$ define $G_i \coloneqq (G_{i - 1}, t_i^{(1, 1)}) \odot (Q_0, q_1)$. (The graph $G_i$ is obtained from $G_{i - 1}$ by adding a new copy of $Q_0$ to $G_{i - 1}$ and identifying 
$q_1$ with the vertex $t_i^{(1, 1)}$.) For $i=1, \ldots, \kappa$ define $G_{i + \kappa} \coloneqq (G_{i + \kappa - 1}, t_i^{(2, 1)}) \odot (Q_0, q_2)$. 
By Lemma~\ref{lem:coalescence}, $G_1, G_2, \ldots, G_{2\kappa}$ are all nut graphs. Let $G \coloneqq G_{2\kappa}$.

Next, observe that automorphisms $\widehat{\alpha}$ can be extended naturally from $\mathcal{M}_3(H)$ to $G$.
However, all automorphisms $\beta_{i, j}$ have been removed, since vertices $t_i^{(j, 1)}$ now carry gadgets, while vertices $t_i^{(j, 2)}$ do not (they are still of degree $2$).
Similarly, all automorphisms $\gamma_i$ have been removed, since the gadget attached to  $t_i^{(1, 1)}$ does not map to the gadget attached to $t_i^{(1, 2)}$, as vertices $q_1$
and $q_2$ are in different vertex orbits of $Q_0$. Moreover, no new automorphisms have been introduced, as vertices $q_1, q_2 \in V(Q_0)$ have trivial stabilisers.
Therefore, $\Aut(G) \cong \Aut(H) \cong \mathfrak{G}$. 

We provided one nut graph $G$ which realises the group $\mathfrak{G}$.  To obtain an infinite family, we can subdivide each edge from $\{ h_i t_i^{(1, 2)} \mid i = 1, \ldots, \kappa \}$
with $4\sigma$ vertices for any choice of $\sigma \geq 0$, i.e.\ we use the subdivision construction on these edges. 
\end{proof}

Note that there are many `degrees of freedom' in the proof of Theorem~\ref{thm:1}. 
In our construction, we could have taken $H$ to be \emph{any} $4$-regular graph that realises the given group $\mathfrak{G}$. 
In case $|\mathfrak{G}| > 1$, Theorem~\ref{thm:sabidussi} already provides infinitely many starting graphs $H$ (which in turn produce infinitely many non-isomorphic nut graphs $G$).
If  $|\mathfrak{G}| = 1$, by~\cite{Bollobas1982}, there are also infinitely many startings graphs $H$.
At the coalescence stage, we could have picked different vertices as $q_1$ and $q_2$ in $Q_0$ (so long as they are in different vertex orbits). We could also have choosen a different gadget graph for $Q_0$, or taken two different gadget graphs. We could have decorated both triangles with the same gadget and taken $q_1 = q_2$; that choice would have removed only elements $\beta_{i, j}$; to further remove elements $\gamma_i$, we could have used the subdivision construction on edges $h_i t_i^{(1, 2)}$.

The multiplier-coalescence construction is prodigal in terms of the number of vertices of the nut graphs obtained.
The order of graph $G$ provided by the proof of Theorem~\ref{thm:1} is $19|V(H)|$, where $|V(H)|$ is the order of the graph $H$.
For a given group $\mathfrak{G}$, $|\mathfrak{G}| > 3$, with $\nu$ generators, the smallest $4$-regular graph of the family constructed by Sabidussi in \cite{Sabidussi1957} is of order $4(\nu + 2)|\mathfrak{G}|$. Therefore, the order of the smallest graph obtained from Sabidussi's starting graph is $76 (\nu + 2)| \mathfrak{G}|$. 

\begin{figure}[!htb]
\centering
\subcaptionbox{\label{subfig:3a}}
{ \begin{tikzpicture}[scale=1.0]
\tikzstyle{vertex}=[draw,circle,font=\scriptsize,minimum size=6pt,inner sep=1pt,fill=blue!40!white]
\tikzstyle{edge}=[draw,thick]
\node[vertex] (v1) at (0, -0.2) {};
\node[vertex] (v2) at (0, 1.2) {};
\node[vertex] (v3) at (0.5, 1.7) {};
\node[vertex] (v4) at (1.7, 1.7) {};
\node[vertex] (v5) at (2.2, 1.2) {};
\node[vertex] (v6) at (2.2, -0.2) {};
\node[vertex] (v7) at (1.7, -0.9) {};
\node[vertex] (v8) at (0.5, -0.9) {};
\node[vertex] (v9) at (1.7, 0.7) {};
\node[vertex] (v10) at (0.5, 0.7) {};
\path[edge] (v1) -- (v2) -- (v3) -- (v4) -- (v5) -- (v6) -- (v7) -- (v8) -- (v1);
\path[edge] (v2) -- (v10) -- (v9) -- (v5);
\path[edge] (v1) -- (v10) -- (v8) -- (v4);
\path[edge] (v1) -- (v6) -- (v9);
\path[edge] (v7) -- (v3) -- (v9);
\path[edge] (v2) -- (v4);
\path[edge] (v5) -- (v7);
\end{tikzpicture} }
\qquad
\subcaptionbox{\label{subfig:3b}}
{ \begin{tikzpicture}[scale=1.0]
\tikzstyle{vertex}=[draw,circle,font=\scriptsize,minimum size=6pt,inner sep=1pt,fill=blue!40!white]
\tikzstyle{edge}=[draw,thick]
\node[vertex] (v1) at (-1.1, 0) {};
\node[vertex] (u1) at (1.1, 0) {};
\node[vertex] (v2) at (-0.3, -0.7) {};
\node[vertex] (u2) at (0.3, -0.7) {};
\node[vertex] (v3) at (-0.5, -1.7) {};
\node[vertex] (u3) at (0.5, -1.7) {};
\node[vertex] (v4) at (-1.5, -2.4) {};
\node[vertex] (u4) at (1.5, -2.4) {};
\node[vertex] (w) at (0, -3.0) {};
\path[edge] (u1) -- (u4) -- (w) -- (v4) -- (v1);
\path[edge] (u1) -- (u3) -- (w) -- (v3) -- (v1);
\path[edge] (u1) -- (u2) -- (v3) -- (v4);
\path[edge] (v1) -- (v2) -- (u3) -- (u4);
\path[edge] (v1) -- (u1);
\path[edge] (v4) -- (v2) -- (u2) -- (u4);
\end{tikzpicture} }
\qquad
\subcaptionbox{\label{subfig:3c}}
{ \begin{tikzpicture}[scale=1.0]
\tikzstyle{vertex}=[draw,circle,font=\scriptsize,minimum size=6pt,inner sep=1pt,fill=blue!40!white]
\tikzstyle{edge}=[draw,thick]
\node[vertex] (v1) at (70:0.9) {};
\node[vertex] (v2) at (75+5:1.8) {};
\node[vertex] (v3) at (110:0.8) {};
\node[vertex] (v4) at (105+5:1.8) {};
\node[vertex] (u1) at ({70+120}:0.9) {};
\node[vertex] (u2) at ({75+120+5}:1.8) {};
\node[vertex] (u3) at ({110+120}:0.8) {};
\node[vertex] (u4) at ({105+120+5}:1.8) {};
\node[vertex] (w1) at ({70-120}:0.9) {};
\node[vertex] (w2) at ({75-120+5}:1.8) {};
\node[vertex] (w3) at ({110-120}:0.8) {};
\node[vertex] (w4) at ({105-120+5}:1.8) {};
\node[vertex] (a) at (0.15, 0.15) {};
\node[vertex] (b) at (-0.15, -0.15) {};
\path[edge] (b) -- (a);
\path[edge] (v1) -- (a);
\path[edge] (u1) -- (a);
\path[edge] (w1) -- (a);
\path[edge] (v3) -- (b);
\path[edge] (u3) -- (b);
\path[edge] (w3) -- (b);
\path[edge] (u4) -- (u3) -- (u1) -- (u2) -- (u4);
\path[edge] (v4) -- (v3) -- (v1) -- (v2) -- (v4);
\path[edge] (w4) -- (w3) -- (w1) -- (w2) -- (w4);
\path[edge] (v2) -- (w2) -- (u2) -- (v2);
\path[edge] (v4) -- (w3);
\path[edge] (u4) -- (v3);
\path[edge] (w4) -- (u3);
\path[edge] (v4) -- (u1);
\path[edge] (u4) -- (w1);
\path[edge] (w4) -- (v1);
\end{tikzpicture} }
\caption{Graphs that realise the minimum order among $4$-regular graphs with automorphism groups $\mathbb{Z}_1, \mathbb{Z}_2$ and 
$\mathbb{Z}_3$, respectively. These graphs are not uniquely determined; they are selected from sets of $4$, $3$ and $8$ candidates, respectively.}
\label{ref:smallestFourReg}
\end{figure}
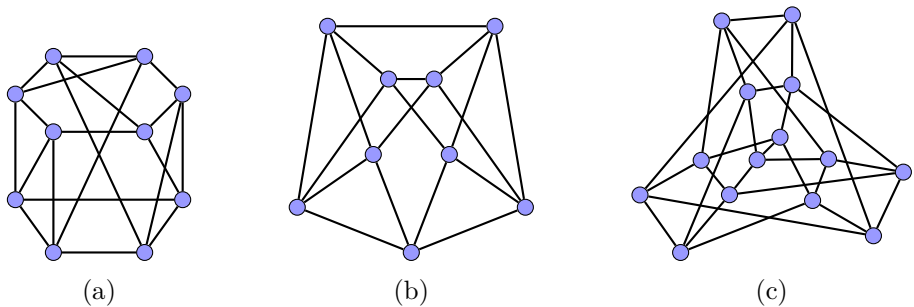

Typically, much smaller examples can exist. Instead of the graph provided by the construction in the proof by Sabidussi, we could take the starting graph $H$ to be a minimal $4$-regular graph
that realises~$\mathfrak{G}$.
For groups $\mathbb{Z}_1, \mathbb{Z}_2$ and 
$\mathbb{Z}_3$, minimal graphs $H$ are shown in Figure~\ref{ref:smallestFourReg}. These have orders $10, 9$ and $14$, respectively. Application of the multiplier-coalescence
construction gives rise to
nut graphs of respective orders $190, 171$ and $266$.

\section{Proof of Theorem~\ref{thm:2}}

\begin{proof}[Proof of Theorem~\ref{thm:2}]
The proof proceeds as for Theorem~\ref{thm:1} to the point where gadgets are attached to $\mathcal{M}_3(H)$.

First, we prove the case $d = 8$.
Consider the graphs $P_1, P_2$ and $P_3$ in Figure~\ref{fig:protoGadgets}. They are non-isomorphic graphs; each of them contains six degree-$3$ vertices and six degree-$4$ vertices.
The gadget $Q_i$, $1 \leq i \leq 3$, is obtained from $P_i$ by adding a new vertex $w_i$ to its complement $\overline{P_i}$ and joining $w_i$ to all degree-$7$ vertices of~$\overline{P_i}$.
Observe that $Q_1, Q_2$ and $Q_3$ are non-isomorphic graphs of order $13$. All vertices of $Q_i$ are of degree $8$, except for $w_i$ which is of degree $6$. It is easy to verify
that $Q_1, Q_2$ and $Q_3$ are nut graphs and that their automorphism groups are trivial.
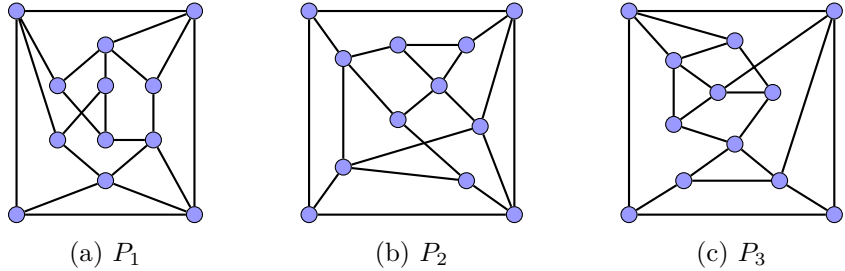
\begin{figure}[!htb]
\centering
\subcaptionbox{\label{subfig:4a}$P_1$}
{ \begin{tikzpicture}[scale=0.9]
\tikzstyle{vertex}=[draw,circle,font=\scriptsize,minimum size=6pt,inner sep=1pt,fill=blue!40!white]
\tikzstyle{edge}=[draw,thick]
\node[vertex] (v5) at (-0.3, 0) {};
\node[vertex] (v6) at (2.3, 0) {};
\node[vertex] (v3) at (-0.3, -3) {};
\node[vertex] (v1) at (2.3, -3) {};
\node[vertex] (v2) at (1, -2.5) {};
\node[vertex] (v7) at (1, -0.5) {};
\node[vertex] (v11) at (1.7, -1.1) {};
\node[vertex] (v9) at (1, -1.1) {};
\node[vertex] (v8) at (0.3, -1.1) {};
\node[vertex] (v0) at (1.7, -1.9) {};
\node[vertex] (v10) at (1, -1.9) {};
\node[vertex] (v4) at (0.3, -1.9) {};
\path[edge] (v5) -- (v3) -- (v1) -- (v6) -- (v5);
\path[edge] (v3) -- (v2) -- (v1) -- (v0) -- (v10) -- (v8) -- (v7) -- (v6) -- (v11) -- (v7) -- (v9) -- (v10);
\path[edge] (v11) -- (v0) -- (v2) -- (v4) -- (v9);
\path[edge] (v4) -- (v5) -- (v8);
\end{tikzpicture} }
\qquad
\subcaptionbox{\label{subfig:4b}$P_2$}
{ \begin{tikzpicture}[scale=0.9]
\tikzstyle{vertex}=[draw,circle,font=\scriptsize,minimum size=6pt,inner sep=1pt,fill=blue!40!white]
\tikzstyle{edge}=[draw,thick]
\node[vertex] (v3) at (0, 0) {};
\node[vertex] (v2) at (3, 0) {};
\node[vertex] (v4) at (0, -3) {};
\node[vertex] (v1) at (3, -3) {};
\node[vertex] (v6) at (0.5, -0.7) {};
\node[vertex] (v10) at (2.3, -0.5) {};
\node[vertex] (v5) at (0.5, -2.3) {};
\node[vertex] (v7) at (2.3, -2.5) {};
\node[vertex] (v8) at (1.3, -1.6) {};
\node[vertex] (v11) at (1.3, -0.5) {};
\node[vertex] (v0) at (2.5, -1.7) {};
\node[vertex] (v9) at (1.9, -1.1) {};
\path[edge] (v3) -- (v4) -- (v1) -- (v2) -- (v3);
\path[edge] (v3) -- (v6) -- (v5) -- (v4);
\path[edge] (v1) -- (v7) -- (v5) -- (v0) -- (v1);
\path[edge] (v7) -- (v8) -- (v9) -- (v0) -- (v2) -- (v10) -- (v11) -- (v9) -- (v10);
\path[edge] (v11) -- (v6) -- (v8);
\end{tikzpicture} }
\qquad
\subcaptionbox{\label{subfig:4c}$P_3$}
{ \begin{tikzpicture}[scale=0.9]
\tikzstyle{vertex}=[draw,circle,font=\scriptsize,minimum size=6pt,inner sep=1pt,fill=blue!40!white]
\tikzstyle{edge}=[draw,thick]
\node[vertex] (v5) at (0, 0) {};
\node[vertex] (v0) at (3, 0) {};
\node[vertex] (v4) at (0, -3) {};
\node[vertex] (v1) at (3, -3) {};
\node[vertex] (v3) at (0.8, -2.5) {};
\node[vertex] (v2) at (2.2, -2.5) {};
\node[vertex] (v7) at (1.3, -1.2) {};
\node[vertex] (v10) at ($ (v7) + (0:0.8) $) {};
\node[vertex] (v8) at ($ (v7) + (72:0.8) $) {};
\node[vertex] (v6) at ($ (v7) + ({72*2}:0.8) $) {};
\node[vertex] (v11) at ($ (v7) + ({72*3}:0.8) $) {};
\node[vertex] (v9) at ($ (v7) + ({72*4}:0.8) $) {};
\path[edge] (v5) -- (v0) -- (v1) -- (v4) -- (v5);
\path[edge] (v4) -- (v3) -- (v2) -- (v0) -- (v7) -- (v6) -- (v8) -- (v10) -- (v9) -- (v11) -- (v6);
\path[edge] (v8) -- (v5) -- (v6);
\path[edge] (v10) -- (v7) -- (v11);
\path[edge] (v3) -- (v9) -- (v2) -- (v1);
\end{tikzpicture} }
\caption{The proto-gadget graphs for the proof of Theorem~\ref{thm:2} in the case $d = 8$.}
\label{fig:protoGadgets}
\end{figure}

As in the proof of Theorem~\ref{thm:1}, we obtain $G$ by a series of coalescence constructions. Start with $G_0 \coloneqq \mathcal{M}_3(H)$.
For $i=1, \ldots, \kappa$ define $G_i \coloneqq (G_{i - 1}, t_i^{(1, 1)}) \odot (Q_1, w_1)$. For $i=1, \ldots, \kappa$ define $G_{i + \kappa} \coloneqq (G_{i + \kappa - 1}, t_i^{(2, 1)}) \odot (Q_1, w_1)$.
For $i=1, \ldots, \kappa$ define $G_{i + 2\kappa} \coloneqq (G_{i + 2\kappa - 1}, t_i^{(1, 2)}) \odot (Q_2, w_2)$.
For $i=1, \ldots, \kappa$ define $G_{i + 3\kappa} \coloneqq (G_{i + 3\kappa - 1}, t_i^{(2, 2)}) \odot (Q_3, w_3)$.
In other words, gadgets $Q_1, Q_2$ and $Q_3$ are attached to degree-$2$ vertices of the triangles as indicated schematically in Figure~\ref{fig:schematicTriangles}(a).
By Lemma~\ref{lem:coalescence}, $G_1, G_2, \ldots, G_{4\kappa}$ are all nut graphs. Let $G \coloneqq G_{4\kappa}$.

By similar reasoning to that used in the proof of Theorem~\ref{thm:1}, we can see that automorphisms $\widehat{\alpha}$ can be extended naturally from $\mathcal{M}_3(H)$ to $G$.
Moreover, the gadgets $Q_1, Q_2$ and $Q_3$ were attached in a manner such that automorphisms $\beta_{i, j}$ and $\gamma_i$ were removed.
Further, attachment has introduced no new automorphisms, as these gadgets  all have trivial symmetry.
Hence, $\Aut(G) \cong \Aut(H) \cong \mathfrak{G}$. Finally, observe that all vertices of $G$ are of degree $8$.
This proves the case $d = 8$.
For higher values of $d$ the proof is similar, but 
the search for the requisite number of proto-gadgets becomes rapidly more tedious.

To prove the result for a given $d$, we start with a $(d/2)$-regular graph $H$ that realises the group $\mathfrak{G}$.
If $|\mathfrak{G}| > 1$, Theorem~\ref{thm:sabidussi} provides us with infinitely many such graphs $H$.
If $|\mathfrak{G}| = 1$, by \cite{Bollobas1982}, there are also infinitely many such graphs $H$.
By Proposition~\ref{prop:multiplier3}, $\mathcal{M}_3(H)$ is a nut graph. In this graph, there are $d / 4$
triangles attached at every vertex of $H$. To remove the unwanted symmetries, every triangle is
decorated by a different pair of gadgets. (See Figure~\ref{fig:schematicTriangles}.) With $s$ gadgets, we can form $\binom{s}{2}$ different pairs. 
We choose the smallest $s$ such that $\binom{s}{2} \geq d/4$.
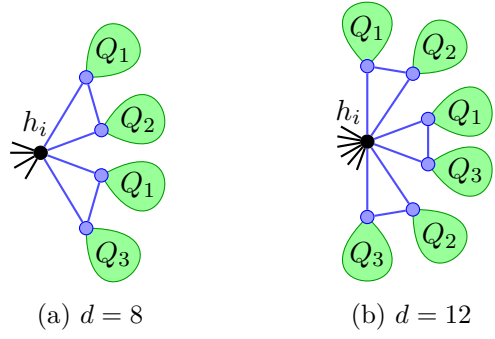
\begin{figure}[!htbp]
\centering
\subcaptionbox{\label{subfig:5a}$d = 8$}
{ \begin{tikzpicture}
\tikzstyle{baloon}=[draw,color=green!60!black,fill=green!40!white]
\tikzstyle{vertex}=[draw,circle,font=\scriptsize,minimum size=5pt,inner sep=1pt,fill=black]
\tikzstyle{bvertex}=[draw,circle,font=\scriptsize,minimum size=5pt,inner sep=1pt,color=blue,fill=blue!40!white]
\tikzstyle{edge}=[draw,thick]
\node[vertex,label={[xshift=-2pt]90:$h_i$}] (h1) at (1, 0) {};
 \clip (0.6,-1.75) rectangle (2.7,1.75);
\coordinate (c1_11) at (1.6, -1.0);
\coordinate (c1_12) at (1.8, -0.3);
\coordinate (c1_21) at (1.8, 0.3);
\coordinate (c1_22) at (1.6, 1.0);
\draw[baloon] (c1_11) .. controls ($ (c1_11) + (0:1.6) $) and ($ (c1_11) + (-90:1.6) $) .. (c1_11);
\draw[baloon] (c1_12) .. controls ($ (c1_12) + (30:1.6) $) and ($ (c1_12) + (-60:1.6) $) .. (c1_12);
\draw[baloon] (c1_21) .. controls ($ (c1_21) + (60:1.6) $) and ($ (c1_21) + (-30:1.6) $) .. (c1_21);
\draw[baloon] (c1_22) .. controls ($ (c1_22) + (0:1.6) $) and ($ (c1_22) + (90:1.6) $) .. (c1_22);
\node[bvertex] (t1_11) at (c1_11) {};
\node[bvertex] (t1_12) at (c1_12) {};
\node[bvertex] (t1_21) at (c1_21) {};
\node[bvertex] (t1_22) at (c1_22) {};
\path[edge,color=blue!70!white] (h1) -- (t1_11) --  (t1_12) -- (h1);
\path[edge,color=blue!70!white] (h1) -- (t1_21) --  (t1_22) -- (h1);
\path[edge] (h1) -- ($ (h1) + (160:0.4) $);
\path[edge] (h1) -- ($ (h1) + (180:0.4) $);
\path[edge] (h1) -- ($ (h1) + (210:0.4) $);
\path[edge] (h1) -- ($ (h1) + (240:0.4) $);
\node[fill=none,draw=none] at ($ (c1_11) + (-45:0.5) $) {$Q_3$};
\node[fill=none,draw=none] at ($ (c1_12) + (-15:0.5) $) {$Q_1$};
\node[fill=none,draw=none] at ($ (c1_21) + (15:0.5) $) {$Q_2$};
\node[fill=none,draw=none] at ($ (c1_22) + (45:0.5) $) {$Q_1$};
\end{tikzpicture} }
\qquad\qquad
\subcaptionbox{\label{subfig:5b}$d = 12$}
{ \begin{tikzpicture}
\tikzstyle{baloon}=[draw,color=green!60!black,fill=green!40!white]
\tikzstyle{vertex}=[draw,circle,font=\scriptsize,minimum size=5pt,inner sep=1pt,fill=black]
\tikzstyle{bvertex}=[draw,circle,font=\scriptsize,minimum size=5pt,inner sep=1pt,color=blue,fill=blue!40!white]
\tikzstyle{edge}=[draw,thick]
\node[vertex,label={[xshift=-7pt]90:$h_i$}] (h1) at (1, 0) {};
\clip (0.5,-1.9) rectangle (2.7,1.9);
\coordinate (c1_11) at (1.6, -0.9);
\coordinate (c1_12) at (1.0, -1.0);
\coordinate (c1_21) at (1.8, -0.3);
\coordinate (c1_22) at (1.8, 0.3);
\coordinate (c1_31) at (1.6, 0.9);
\coordinate (c1_32) at (1.0, 1.0);
\draw[baloon] (c1_31) .. controls ($ (c1_31) + (90:1.6) $) and ($ (c1_31) + (0:1.6) $) .. (c1_31);
\node[fill=none,draw=none] at ($ (c1_31) + (45:0.5) $) {$Q_2$};
\draw[baloon] (c1_32) .. controls ($ (c1_32) + (135:1.6) $) and ($ (c1_32) + (45:1.6) $) .. (c1_32);
\node[fill=none,draw=none] at ($ (c1_32) + (90:0.5) $) {$Q_1$};
\draw[baloon] (c1_11) .. controls ($ (c1_11) + (0:1.6) $) and ($ (c1_11) + (-90:1.6) $) .. (c1_11);
\node[fill=none,draw=none] at ($ (c1_11) + (-45:0.5) $) {$Q_2$};
\draw[baloon] (c1_12) .. controls ($ (c1_12) + (-135:1.6) $) and ($ (c1_12) + (-45:1.6) $) .. (c1_12);
\node[fill=none,draw=none] at ($ (c1_12) + (-90:0.5) $) {$Q_3$};
\draw[baloon] (c1_21) .. controls ($ (c1_21) + (35:1.6) $) and ($ (c1_21) + (-55:1.6) $) .. (c1_21);
\node[fill=none,draw=none] at ($ (c1_21) + (-10:0.5) $) {$Q_3$};
\draw[baloon] (c1_22) .. controls ($ (c1_22) + (55:1.6) $) and ($ (c1_22) + (-35:1.6) $) .. (c1_22);
\node[fill=none,draw=none] at ($ (c1_22) + (10:0.5) $) {$Q_1$};
\node[bvertex] (t1_11) at (c1_11) {};
\node[bvertex] (t1_12) at (c1_12) {};
\node[bvertex] (t1_21) at (c1_21) {};
\node[bvertex] (t1_22) at (c1_22) {};
\node[bvertex] (t1_31) at (c1_31) {};
\node[bvertex] (t1_32) at (c1_32) {};
\path[edge,color=blue!70!white] (h1) -- (t1_11) --  (t1_12) -- (h1);
\path[edge,color=blue!70!white] (h1) -- (t1_21) --  (t1_22) -- (h1);
\path[edge,color=blue!70!white] (h1) -- (t1_31) --  (t1_32) -- (h1);
\path[edge] (h1) -- ($ (h1) + (150:0.4) $);
\path[edge] (h1) -- ($ (h1) + (170:0.4) $);
\path[edge] (h1) -- ($ (h1) + (190:0.4) $);
\path[edge] (h1) -- ($ (h1) + (210:0.4) $);
\path[edge] (h1) -- ($ (h1) + (230:0.4) $);
\path[edge] (h1) -- ($ (h1) + (250:0.4) $);
\end{tikzpicture} }
\caption{Arrangements of gadgets $Q_i$ on degree-$2$ vertices of a bouquet of triangles that remove unwanted automorphisms (for the cases $d = 8$ and $d = 12$).}
\label{fig:schematicTriangles}
\end{figure}
Figure~\ref{fig:protoGadgetsExtra} tabulates a sufficient set of proto-gadgets for degrees $12, 16, 20$ and $24$. The complement of $P_i^{(d)}$ contains $d - 2$ vertices
of degree $d - 1$, and the remaining vertices are of degree $d$. To obtain $Q_i^{(d)}$, add a new vertex to the complement of $P_i^{(d)}$ and connect it to all vertices of degree $d - 1$.
Graph $Q_i^{(d)}$ has exactly one vertex of degree $d - 1$, while the rest are of degree $d$. It is easy to verify that graphs $Q_1^{(d)}, Q_2^{(d)}, \ldots$ are non-isomorphic
and that they all have trivial symmetry.
\begin{figure}[!p]
\centering
\subcaptionbox{\label{subfig:6a}$P_1^{(12)}$}
{ \begin{tikzpicture}[scale=0.8,rotate=90]
\tikzstyle{vertex}=[draw,circle,font=\scriptsize,minimum size=6pt,inner sep=1pt,fill=blue!40!white]
\tikzstyle{edge}=[draw,thick]
\node[vertex] (v1) at (0, 0) {};
\node[vertex] (v3) at (3, 0) {};
\node[vertex] (v7) at (0, -4) {};
\node[vertex] (v6) at (3, -4) {};
\node[vertex] (v2) at (1.5, 0) {};
\node[vertex] (v9) at (0, -0.8) {};
\node[vertex] (v8) at (0, -1.6) {};
\node[vertex] (v4) at (3, -0.8) {};
\node[vertex] (v5) at (3, -1.6) {};
\node[vertex] (v10) at (1, -1.7) {};
\node[vertex] (v11) at (2, -1.7) {};
\node[vertex] (v12) at (1.5, -2.5) {};
\node[vertex] (v15) at (1.5, -3.3) {};
\node[vertex] (v13) at (0.8, -2.9) {};
\node[vertex] (v14) at (2.2, -2.9) {};
\path[edge] (v1) -- (v2) -- (v3) -- (v4) -- (v5) -- (v6) -- (v7) -- (v8) -- (v9) -- (v1);
\path[edge] (v9) -- (v4);
\path[edge] (v8) -- (v10) -- (v11) -- (v5);
\path[edge] (v11) -- (v12) -- (v13) -- (v15) -- (v14) -- (v12);
\path[edge] (v7) -- (v13);
\path[edge] (v6) -- (v14);
\end{tikzpicture} }
\subcaptionbox{\label{subfig:6b}$P_2^{(12)}$}
{ \begin{tikzpicture}[scale=0.8,rotate=90]
\tikzstyle{vertex}=[draw,circle,font=\scriptsize,minimum size=6pt,inner sep=1pt,fill=blue!40!white]
\tikzstyle{edge}=[draw,thick]
\node[vertex] (v1) at (0, 0) {};
\node[vertex] (v2) at (3, 0) {};
\node[vertex] (v6) at (0, -4) {};
\node[vertex] (v5) at (3, -4) {};
\node[vertex] (v3) at (3, -1.7) {};
\node[vertex] (v4) at (3, -3.2) {};
\node[vertex] (v7) at (0, -2) {};
\node[vertex] (v8) at (0.8, -0.8) {};
\node[vertex] (v9) at (1.5, -0.6) {};
\node[vertex] (v10) at (2.3, -0.8) {};
\node[vertex] (v14) at (1, -3) {};
\node[vertex] (v15) at (2, -3) {};
\node[vertex] (v12) at (1, -2.2) {};
\node[vertex] (v13) at (2, -2.2) {};
\node[vertex] (v11) at (1.3, -1.5) {};
\path[edge] (v1) -- (v2) -- (v3) -- (v4) -- (v5) -- (v6) -- (v7) -- (v1);
\path[edge] (v6) -- (v14) -- (v15) -- (v4);
\path[edge] (v14) -- (v12) -- (v11) -- (v8) -- (v1);
\path[edge] (v8) -- (v9) -- (v10) -- (v2);
\path[edge] (v10) -- (v13) -- (v15);
\path[edge] (v11) -- (v3);
\end{tikzpicture} }
\subcaptionbox{\label{subfig:6c}$P_3^{(12)}$}
{ \begin{tikzpicture}[scale=0.8,rotate=90]
\tikzstyle{vertex}=[draw,circle,font=\scriptsize,minimum size=6pt,inner sep=1pt,fill=blue!40!white]
\tikzstyle{edge}=[draw,thick]
\node[vertex] (v1) at (0, 0) {};
\node[vertex] (v3) at (3, 0) {};
\node[vertex] (v7) at (0, -4) {};
\node[vertex] (v5) at (3, -4) {};
\node[vertex] (v2) at (1.5, 0) {};
\node[vertex] (v4) at (3, -2) {};
\node[vertex] (v6) at (1.5, -4) {};
\node[vertex] (v8) at (0, -1.2) {};
\node[vertex] (v9) at (0.5, -0.7) {};
\node[vertex] (v10) at (1, -0.4) {};
\node[vertex] (v15) at (1.1, -3.0) {};
\node[vertex] (v14) at (0.4, -2.0) {};
\node[vertex] (v16) at (2.5, -2.7) {};
\node[vertex] (v13) at (2.0, -2.0) {};
\node[vertex] (v12) at (2.2, -1.2) {};
\node[vertex] (v11) at (2.6, -0.7) {};
\path[edge] (v1) -- (v2) -- (v3) -- (v4) -- (v5) -- (v6) -- (v7) -- (v8) -- (v1);
\path[edge] (v7) -- (v14) -- (v15) -- (v5);
\path[edge] (v8) -- (v9) -- (v10) -- (v3);
\path[edge] (v8) -- (v14) -- (v10) -- (v12) -- (v11) -- (v2) -- (v13) -- (v12) -- (v4) -- (v16);
\path[edge] (v9) -- (v13) -- (v6) -- (v16);
\path[edge] (v1) -- (v15) -- (v16) -- (v11);
\end{tikzpicture} }

\vspace{\baselineskip}
\subcaptionbox{\label{subfig:6d}$P_1^{(16)}$}
{ \begin{tikzpicture}[scale=0.8,rotate=90]
\tikzstyle{vertex}=[draw,circle,font=\scriptsize,minimum size=6pt,inner sep=1pt,fill=blue!40!white]
\tikzstyle{edge}=[draw,thick]
\node[vertex] (v1) at (0, 0) {};
\node[vertex] (v3) at (3, 0) {};
\node[vertex] (v10) at (0, -4) {};
\node[vertex] (v7) at (3, -4) {};
\node[vertex] (v2) at (1.0, 0) {};
\node[vertex] (v4) at (3, -1.4) {};
\node[vertex] (v5) at (3, -2) {};
\node[vertex] (v6) at (3, -2.6) {};
\node[vertex] (v8) at (2, -4) {};
\node[vertex] (v9) at (1, -4) {};
\node[vertex] (v11) at (1.2, -3.2) {};
\node[vertex] (v12) at (2.2, -3.2) {};
\node[vertex] (v13) at (1.7, -3.6) {};
\node[vertex] (v19) at (1.0, -2.4) {}; 
\node[vertex] (v14) at (0.8, -0.8) {}; 
\node[vertex] (v15) at (1.6, -1.8) {}; 
\node[vertex] (v16) at (1.8, -0.8) {}; 
\node[vertex] (v18) at (2.4, -0.3) {}; 
\node[vertex] (v17) at (2.4, -1.2) {}; 
\path[edge] (v1) -- (v2) -- (v3) -- (v4) -- (v5) -- (v6) -- (v7) -- (v8) -- (v9) -- (v10) -- (v1);
\path[edge] (v9) -- (v11) -- (v12) -- (v13) -- (v8);
\path[edge] (v11) -- (v13);
\path[edge] (v12) -- (v7);
\path[edge] (v2) -- (v14) -- (v19) -- (v6);
\path[edge] (v14) -- (v15) -- (v5); 
\path[edge] (v3) -- (v18) -- (v16) -- (v17) -- (v4);
\path[edge] (v15) -- (v16);
\end{tikzpicture} }
\subcaptionbox{\label{subfig:6e}$P_2^{(16)}$}
{ \begin{tikzpicture}[scale=0.8,rotate=90]
\tikzstyle{vertex}=[draw,circle,font=\scriptsize,minimum size=6pt,inner sep=1pt,fill=blue!40!white]
\tikzstyle{edge}=[draw,thick]
\node[vertex] (v1) at (0, 0) {};
\node[vertex] (v4) at (3, 0) {};
\node[vertex] (v10) at (0, -4) {};
\node[vertex] (v9) at (3, -4) {};
\node[vertex] (v2) at (1, 0) {};
\node[vertex] (v3) at (2, 0) {};
\node[vertex] (v5) at (3, -0.8) {};
\node[vertex] (v6) at (3, -1.6) {};
\node[vertex] (v7) at (3, -2.4) {};
\node[vertex] (v8) at (3, -3.2) {};
\node[vertex] (v11) at (0, -2.4) {};
\node[vertex] (v12) at (1, -3.4) {};
\node[vertex] (v13) at (2, -3.4) {};
\node[vertex] (v14) at (1.5, -2.7) {};
\node[vertex] (v15) at (1.3, -1.4) {};
\node[vertex] (v16) at (2.0, -2.1) {};
\node[vertex] (v17) at (2.0, -1.2) {};
\node[vertex] (v19) at (2.4, -0.5) {};
\node[vertex] (v18) at (1.7, -0.5) {};
\path[edge] (v1) -- (v2) -- (v3) -- (v4) -- (v5) -- (v6) -- (v7) -- (v8) -- (v9) -- (v10) -- (v11) -- (v1);
\path[edge] (v12) -- (v13) -- (v14) -- (v12) -- (v10);
\path[edge] (v13) -- (v9);
\path[edge] (v14) -- (v8);
\path[edge] (v7) -- (v16) -- (v15) -- (v2);
\path[edge] (v15) -- (v17) -- (v6); 
\path[edge] (v17) -- (v18) -- (v3); 
\path[edge] (v18) -- (v19) -- (v5); 
\end{tikzpicture} }
\subcaptionbox{\label{subfig:6f}$P_3^{(16)}$}
{ \begin{tikzpicture}[scale=0.8,rotate=90]
\tikzstyle{vertex}=[draw,circle,font=\scriptsize,minimum size=6pt,inner sep=1pt,fill=blue!40!white]
\tikzstyle{edge}=[draw,thick]
\node[vertex] (v1) at (0, 0) {};
\node[vertex] (v4) at (3, 0) {};
\node[vertex] (v7) at (0, -4) {};
\node[vertex] (v6) at (3, -4) {};
\node[vertex] (v2) at (1, 0) {};
\node[vertex] (v3) at (2, 0) {};
\node[vertex] (v5) at (3, -1.5) {};
\node[vertex] (v8) at (0, -3.0) {};
\node[vertex] (v9) at (0, -1.3) {};
\node[vertex] (v11) at (2.5, -0.8) {};
\node[vertex] (v10) at (2.5, -3.2) {};
\node[vertex] (v12) at (1.1, -0.6) {};
\node[vertex] (v13) at (1.9, -0.6) {};
\node[vertex] (v14) at (1.5, -1.0) {};
\node[vertex] (v15) at (1.8, -2.2) {};
\node[vertex] (v16) at (0.8, -3.2) {};
\node[vertex] (v17) at (1.5, -2.8) {};
\node[vertex] (v18) at (0.5, -2.3) {};
\node[vertex] (v19) at (1.0, -1.5) {};
\path[edge] (v7) -- (v16) -- (v17) -- (v15); 
\path[edge] (v2) -- (v12) -- (v13) -- (v3); 
\path[edge] (v1) -- (v2) -- (v3) -- (v4) -- (v5) -- (v6) -- (v7) -- (v8) -- (v9) -- (v1); 
\path[edge] (v4) -- (v11) -- (v10) -- (v6); 
\path[edge] (v12) -- (v14) -- (v13); 
\path[edge] (v14) -- (v15) -- (v10); 
\path[edge] (v8) -- (v18) -- (v19) -- (v16); 
\path[edge] (v1) -- (v19);
\end{tikzpicture} }
\subcaptionbox{\label{subfig:6g}$P_4^{(16)}$}
{ \begin{tikzpicture}[scale=0.8,rotate=90]
\tikzstyle{vertex}=[draw,circle,font=\scriptsize,minimum size=6pt,inner sep=1pt,fill=blue!40!white]
\tikzstyle{edge}=[draw,thick]
\node[vertex] (v1) at (0, 0) {};
\node[vertex] (v2) at (3, 0) {};
\node[vertex] (v6) at (0, -4) {};
\node[vertex] (v5) at (3, -4) {};
\node[vertex] (v3) at (3, -1.3) {};
\node[vertex] (v4) at (3, -2.6) {};
\node[vertex] (v7) at (0, -2.3) {};
\node[vertex] (v8) at (0, -1.1) {};
\node[vertex] (v9) at (1.2, -3.3) {};
\node[vertex] (v10) at (1.2, -2.3) {};
\node[vertex] (v11) at (1.0, -1.3) {};
\node[vertex] (v12) at (0.6, -2.5) {};
\node[vertex] (v13) at (2.5, -2.0) {};
\node[vertex] (v14) at (2.2, -2.7) {};
\node[vertex] (v15) at (1.8, -1.9) {};
\node[vertex] (v16) at (1.6, -1.1) {};
\node[vertex] (v19) at (2.6, -0.4) {};
\node[vertex] (v17) at (1.9, -0.5) {};
\node[vertex] (v18) at (2.2, -1.3) {};
\path[edge] (v1) -- (v2) -- (v3) -- (v4) -- (v5) -- (v6) -- (v7) -- (v8) -- (v1);
\path[edge] (v6) -- (v9) -- (v10) -- (v11) -- (v8);
\path[edge] (v7) -- (v12) -- (v10);
\path[edge] (v4) -- (v13) -- (v3);
\path[edge] (v5) -- (v14) -- (v13);
\path[edge] (v14) -- (v15) -- (v16) -- (v11);
\path[edge] (v16) -- (v17) -- (v19) -- (v18) -- (v15);
\path[edge] (v19) -- (v2);
\end{tikzpicture} }

\vspace{\baselineskip}
\subcaptionbox{\label{subfig:6h}$P_1^{(20)}$}
{ \begin{tikzpicture}[scale=0.8,rotate=90]
\tikzstyle{vertex}=[draw,circle,font=\scriptsize,minimum size=6pt,inner sep=1pt,fill=blue!40!white]
\tikzstyle{edge}=[draw,thick]
\node[vertex] (v1) at (0, 0) {};
\node[vertex] (v3) at (3, 0) {};
\node[vertex] (v8) at (0, -4) {};
\node[vertex] (v6) at (3, -4) {};
\node[vertex] (v2) at (1.5, 0) {};
\node[vertex] (v4) at (3, -1.3) {};
\node[vertex] (v5) at (3, -2.7) {};
\node[vertex] (v7) at (1.5, -4) {};
\node[vertex] (v13) at (0, -0.6) {};
\node[vertex] (v12) at (0, -1.3) {};
\node[vertex] (v11) at (0, -2.0) {};
\node[vertex] (v10) at (0, -2.6) {};
\node[vertex] (v9) at (0, -3.3) {};
\node[vertex] (v14) at (0.8, -3.4) {};
\node[vertex] (v15) at (1.8, -3.2) {};
\node[vertex] (v16) at (1.6, -2.5) {};
\node[vertex] (v17) at (1.2, -2.0) {};
\node[vertex] (v18) at (2.2, -1.0) {};
\node[vertex] (v19) at (0.5, -1.3) {};
\node[vertex] (v20) at (0.5, -0.7) {};
\node[vertex] (v21) at (1.1, -1.0) {};
\node[vertex] (v22) at (1.6, -0.7) {};
\node[vertex] (v23) at (1.2, -0.4) {};
\path[edge] (v21) -- (v22) -- (v23) -- (v2);
\path[edge] (v12) -- (v19) -- (v20) -- (v13);
\path[edge] (v20) -- (v21) -- (v19);
\path[edge] (v1) -- (v2) -- (v3) -- (v4) -- (v5) -- (v6) -- (v7) -- (v8) -- (v9) -- (v10)  -- (v11)  -- (v12) -- (v13) -- (v1);
\path[edge] (v9) -- (v14) -- (v7);
\path[edge] (v14) -- (v15) -- (v6);
\path[edge] (v15) -- (v16) -- (v17) -- (v11);
\path[edge] (v10) -- (v16);
\path[edge] (v17) -- (v18);
\path[edge] (v3) -- (v18) -- (v5);
\end{tikzpicture} }
\subcaptionbox{\label{subfig:6i}$P_2^{(20)}$}
{ \begin{tikzpicture}[scale=0.8,rotate=90]
\tikzstyle{vertex}=[draw,circle,font=\scriptsize,minimum size=6pt,inner sep=1pt,fill=blue!40!white]
\tikzstyle{edge}=[draw,thick]
\node[vertex] (v1) at (0, 0) {};
\node[vertex] (v2) at (3, 0) {};
\node[vertex] (v8) at (0, -4) {};
\node[vertex] (v5) at (3, -4) {};
\node[vertex] (v7) at (1, -4) {};
\node[vertex] (v6) at (2, -4) {};
\node[vertex] (v3) at (3, -1.3) {};
\node[vertex] (v4) at (3, -2.7) {};
\node[vertex] (v12) at (0, -0.8) {};
\node[vertex] (v11) at (0, -1.6) {};
\node[vertex] (v10) at (0, -2.4) {};
\node[vertex] (v9) at (0, -3.2) {};
\node[vertex] (v14) at (0.6, -0.9) {};
\node[vertex] (v13) at (0.6, -1.5) {};
\node[vertex] (v15) at (1.3, -1.2) {};
\node[vertex] (v16) at (1.8, -0.8) {};
\node[vertex] (v17) at (2.4, -0.6) {};
\node[vertex] (v19) at (2.4, -2.2) {};
\node[vertex] (v18) at (2.5, -1.6) {};
\node[vertex] (v20) at (1.6, -2.3) {};
\node[vertex] (v21) at (1.4, -2.8) {};
\node[vertex] (v22) at (1.2, -3.4) {};
\node[vertex] (v23) at (2.4, -3.4) {};
\path[edge] (v8) -- (v22) -- (v6);
\path[edge] (v22) -- (v23) -- (v5);
\path[edge] (v23) -- (v21) -- (v9);
\path[edge] (v10) -- (v20) -- (v19);
\path[edge] (v21) -- (v20);
\path[edge] (v3) -- (v18) -- (v19) -- (v4);
\path[edge] (v11) -- (v13) -- (v14) -- (v12);
\path[edge] (v13) -- (v15) -- (v14);
\path[edge] (v15) -- (v16) -- (v17) -- (v2);
\path[edge] (v1) -- (v2) -- (v3) -- (v4) -- (v5) -- (v6) -- (v7) -- (v8) -- (v9) -- (v10)  -- (v11)  -- (v12) -- (v1);
\end{tikzpicture} }
\subcaptionbox{\label{subfig:6j}$P_3^{(20)}$}
{ \begin{tikzpicture}[scale=0.8,rotate=90]
\tikzstyle{vertex}=[draw,circle,font=\scriptsize,minimum size=6pt,inner sep=1pt,fill=blue!40!white]
\tikzstyle{edge}=[draw,thick]
\node[vertex] (v1) at (0, 0) {};
\node[vertex] (v3) at (3, 0) {};
\node[vertex] (v7) at (0, -4) {};
\node[vertex] (v6) at (3, -4) {};
\node[vertex] (v2) at (2, 0) {};
\node[vertex] (v4) at (3, -1.2) {};
\node[vertex] (v5) at (3, -3.2) {};
\node[vertex] (v11) at (0, -0.8) {};
\node[vertex] (v10) at (0, -1.6) {};
\node[vertex] (v9) at (0, -2.4) {};
\node[vertex] (v8) at (0, -3.2) {};
\node[vertex] (v12) at (0.8, -0.5) {};
\node[vertex] (v13) at (1, -3.5) {};
\node[vertex] (v14) at (1.8, -3.6) {};
\node[vertex] (v15) at (1.9, -3.0) {};
\node[vertex] (v16) at (2.4, -2.6) {};
\node[vertex] (v17) at (0.8, -2.8) {};
\node[vertex] (v18) at (0.7, -2.0) {};
\node[vertex] (v19) at (1.6, -2.4) {};
\node[vertex] (v20) at (1.2, -1.4) {};
\node[vertex] (v21) at (1.8, -1.7) {};
\node[vertex] (v22) at (1.6, -0.8) {};
\node[vertex] (v23) at (2.3, -1.1) {};
\path[edge] (v20) -- (v22) -- (v23) -- (v21);
\path[edge] (v17) -- (v19) -- (v21) -- (v20) -- (v18);
\path[edge] (v8) -- (v17) -- (v18) -- (v9);
\path[edge] (v7) -- (v13) -- (v14) -- (v6);
\path[edge] (v14) -- (v15) -- (v16) -- (v5);
\path[edge] (v10) -- (v12) -- (v2);
\path[edge] (v1) -- (v12);
\path[edge] (v23) -- (v4);
\path[edge] (v16) -- (v19);
\path[edge] (v1) -- (v2) -- (v3) -- (v4) -- (v5) -- (v6) -- (v7) -- (v8) -- (v9) -- (v10)  -- (v11)  -- (v1);
\end{tikzpicture} }
\subcaptionbox{\label{subfig:6k}$P_4^{(20)}$}
{ \begin{tikzpicture}[scale=0.8,rotate=90]
\tikzstyle{vertex}=[draw,circle,font=\scriptsize,minimum size=6pt,inner sep=1pt,fill=blue!40!white]
\tikzstyle{edge}=[draw,thick]
\node[vertex] (v1) at (0, 0) {};
\node[vertex] (v4) at (3, 0) {};
\node[vertex] (v7) at (0, -4) {};
\node[vertex] (v6) at (3, -4) {};
\node[vertex] (v2) at (1, 0) {};
\node[vertex] (v3) at (2, 0) {};
\node[vertex] (v5) at (3, -3) {};
\node[vertex] (v8) at (0, -3.2) {};
\node[vertex] (v9) at (0, -1.8) {};
\node[vertex] (v10) at (0.6, -2.8) {};
\node[vertex] (v11) at (0.6, -2.2) {};
\node[vertex] (v11) at (0.6, -2.2) {};
\node[vertex] (v12) at (0.8, -1.5) {};
\node[vertex] (v13) at (1.6, -3.2) {};
\node[vertex] (v14) at (2.7, -1.5) {};
\node[vertex] (v15) at (1.2, -1.0) {};
\node[vertex] (v16) at (1.2, -0.5) {};
\path[edge] (v12) -- (v15) -- (v16) -- (v2);
\node[vertex] (v17) at (2.3, -2.1) {};
\node[vertex] (v18) at (1.5, -1.8) {};
\node[vertex] (v19) at (2.1, -0.4) {};
\node[vertex] (v20) at (1.9, -0.8) {};
\node[vertex] (v21) at (1.7, -1.3) {};
\node[vertex] (v22) at (2.1, -1.4) {};
\node[vertex] (v23) at (2.4, -0.9) {};
\path[edge] (v18) -- (v21) -- (v20) -- (v19) -- (v3);
\path[edge] (v21) -- (v22) -- (v23) -- (v19);
\path[edge] (v17) -- (v23);
\path[edge] (v15) -- (v18) -- (v17) -- (v14);
\path[edge] (v5) -- (v14) -- (v4);
\path[edge] (v1) -- (v12) -- (v13);
\path[edge] (v7) -- (v13) -- (v6);
\path[edge] (v8) -- (v10) -- (v11) -- (v9);
\path[edge] (v1) -- (v2) -- (v3) -- (v4) -- (v5) -- (v6) -- (v7) -- (v8) -- (v9) -- (v1);
\end{tikzpicture} }

\vspace{\baselineskip}
\subcaptionbox{\label{subfig:6l}$P_1^{(24)}$}
{ \begin{tikzpicture}[scale=0.8,rotate=90]
\tikzstyle{vertex}=[draw,circle,font=\scriptsize,minimum size=6pt,inner sep=1pt,fill=blue!40!white]
\tikzstyle{edge}=[draw,thick]
\node[vertex] (v1) at (0, 0) {};
\node[vertex] (v2) at (3, 0) {};
\node[vertex] (v8) at (0, -4) {};
\node[vertex] (v7) at (3, -4) {};
\node[vertex] (v3) at (3, -1.2) {};
\node[vertex] (v4) at (3, -2) {};
\node[vertex] (v5) at (3, -2.8) {};
\node[vertex] (v6) at (3, -3.4) {};
\node[vertex] (v11) at (0, -0.8) {};
\node[vertex] (v10) at (0, -2.4) {};
\node[vertex] (v9) at (0, -3.2) {};
\node[vertex] (v12) at (0.8, -3.5) {};
\node[vertex] (v13) at (1.4, -3.2) {};
\node[vertex] (v14) at (2.2, -3.5) {};
\node[vertex] (v15) at (0.7, -2.9) {};
\node[vertex] (v16) at (1.5, -2.3) {};
\node[vertex] (v17) at (2.2, -1.6) {};
\node[vertex] (v18) at (0.5, -2.0) {};
\node[vertex] (v19) at (1.2, -1.8) {};
\node[vertex] (v20) at (1.8, -1.0) {};
\node[vertex] (v21) at (2.4, -0.6) {};
\node[vertex] (v22) at (0.5, -1.4) {};
\node[vertex] (v23) at (1.0, -1.4) {};
\node[vertex] (v24) at (1.6, -0.5) {};
\node[vertex] (v25) at (0.5, -0.7) {};
\node[vertex] (v26) at (0.5, -0.3) {};
\node[vertex] (v27) at (1.1, -0.5) {};
\path[edge] (v11) -- (v25) -- (v27) -- (v26) -- (v1);
\path[edge] (v24) -- (v27);
\path[edge] (v23) -- (v22) -- (v24);
\path[edge] (v10) -- (v18) -- (v19) -- (v20) -- (v21) -- (v2);
\path[edge] (v9) -- (v15) -- (v16) -- (v4);
\path[edge] (v16) -- (v17) -- (v3);
\path[edge] (v21) -- (v17);
\path[edge] (v18) -- (v22);
\path[edge] (v19) -- (v23);
\path[edge] (v20) -- (v24);
\path[edge] (v12) -- (v15);
\path[edge] (v13) -- (v5);
\path[edge] (v8) -- (v12) -- (v13) -- (v14) -- (v6);
\path[edge] (v1) -- (v2) -- (v3) -- (v4) -- (v5) -- (v6) -- (v7) -- (v8) -- (v9)  -- (v10)  -- (v11) -- (v1);
\end{tikzpicture} }
\subcaptionbox{\label{subfig:6m}$P_2^{(24)}$}
{ \begin{tikzpicture}[scale=0.8,rotate=90]
\tikzstyle{vertex}=[draw,circle,font=\scriptsize,minimum size=6pt,inner sep=1pt,fill=blue!40!white]
\tikzstyle{edge}=[draw,thick]
\node[vertex] (v1) at (0, 0) {};
\node[vertex] (v2) at (3, 0) {};
\node[vertex] (v6) at (0, -4) {};
\node[vertex] (v3) at (3, -4) {};
\node[vertex] (v4) at (2.3, -4) {};
\node[vertex] (v5) at (1.5, -4) {};
\node[vertex] (v7) at (0, -3.1) {};
\node[vertex] (v8) at (0, -1.6) {};
\node[vertex] (v9) at (0.5, -3.5) {};
\node[vertex] (v10) at (1, -3.5) {};
\node[vertex] (v11) at (1.9, -3.5) {};
\node[vertex] (v12) at (2.1, -3.1) {};
\node[vertex] (v13) at (2.6, -3.2) {};
\node[vertex] (v14) at (0.5, -3.0) {};
\node[vertex] (v15) at (0.8, -2.5) {};
\node[vertex] (v16) at (1.6, -2.5) {};
\node[vertex] (v17) at (1.9, -2.0) {};
\node[vertex] (v18) at (2.3, -1.8) {};
\node[vertex] (v19) at (2.7, -2.1) {};
\node[vertex] (v20) at (2.2, -1.1) {};
\node[vertex] (v21) at (2.6, -0.6) {};
\node[vertex] (v22) at (0.4, -2.0) {};
\node[vertex] (v23) at (0.8, -1.6) {};
\node[vertex] (v24) at (1.2, -2.0) {};
\node[vertex] (v25) at (0.4, -1.2) {};
\node[vertex] (v26) at (0.7, -0.5) {};
\node[vertex] (v27) at (1.4, -1.4) {};
\path[edge] (v26) -- (v27) -- (v16);
\path[edge] (v22) -- (v25);
\path[edge] (v24) -- (v27);
\path[edge] (v8) -- (v25) -- (v26) -- (v1);
\path[edge] (v15) -- (v22) -- (v23) -- (v24) -- (v15);
\path[edge] (v21) -- (v2);
\path[edge] (v18) -- (v20) -- (v21) -- (v19);
\path[edge] (v12) -- (v16) -- (v17) -- (v18) -- (v19) -- (v13);
\path[edge] (v7) -- (v14) -- (v9);
\path[edge] (v15) -- (v14);
\path[edge] (v4) -- (v11) -- (v12) -- (v13) -- (v3);
\path[edge] (v6) -- (v9) -- (v10) -- (v5);
\path[edge] (v1) -- (v2) -- (v3) -- (v4) -- (v5) -- (v6) -- (v7) -- (v8) -- (v1);
\end{tikzpicture} }
\subcaptionbox{\label{subfig:6n}$P_3^{(24)}$}
{ \begin{tikzpicture}[scale=0.8,rotate=90]
\tikzstyle{vertex}=[draw,circle,font=\scriptsize,minimum size=6pt,inner sep=1pt,fill=blue!40!white]
\tikzstyle{edge}=[draw,thick]
\node[vertex] (v1) at (0, 0) {};
\node[vertex] (v2) at (3, 0) {};
\node[vertex] (v8) at (0, -4) {};
\node[vertex] (v7) at (3, -4) {};
\node[vertex] (v3) at (3, -1.4) {};
\node[vertex] (v4) at (3, -2.2) {};
\node[vertex] (v5) at (3, -2.8) {};
\node[vertex] (v6) at (3, -3.4) {};
\node[vertex] (v10) at (0, -1.7) {};
\node[vertex] (v9) at (0, -2.4) {};
\node[vertex] (v11) at (0.7, -3.5) {};
\node[vertex] (v12) at (1.4, -3.4) {};
\node[vertex] (v13) at (2.1, -3.6) {};
\node[vertex] (v14) at (0.7, -2.8) {};
\node[vertex] (v15) at (1.4, -2.7) {};
\node[vertex] (v16) at (1.9, -2.2) {};
\node[vertex] (v17) at (2.4, -1.0) {};
\node[vertex] (v18) at (0.5, -2.2) {};
\node[vertex] (v19) at (1.0, -1.9) {};
\node[vertex] (v20) at (1.7, -1.6) {};
\node[vertex] (v21) at (0.4, -1.6) {};
\node[vertex] (v22) at (1.7, -1.1) {};
\node[vertex] (v23) at (0.6, -1.1) {};
\node[vertex] (v24) at (1.1, -1.0) {};
\node[vertex] (v25) at (1.3, -0.6) {};
\node[vertex] (v27) at (0.7, -0.3) {};
\node[vertex] (v26) at (0.5, -0.6) {};
\path[edge] (v27) -- (v26)-- (v24);
\path[edge] (v1) -- (v27)-- (v25);
\path[edge] (v21) -- (v23)-- (v24) -- (v25) -- (v22);
\path[edge] (v18) -- (v21)-- (v22) -- (v20);
\path[edge] (v10) -- (v18) -- (v19)-- (v20) -- (v17);
\path[edge] (v16) -- (v17)-- (v2);
\path[edge] (v15) -- (v16)-- (v3);
\path[edge] (v14) -- (v15)-- (v4);
\path[edge] (v12) -- (v5);
\path[edge] (v11) -- (v14) -- (v9);
\path[edge] (v8) -- (v11) -- (v12) -- (v13) -- (v6);
\path[edge] (v12) -- (v5);
\path[edge] (v1) -- (v2) -- (v3) -- (v4) -- (v5) -- (v6) -- (v7) -- (v8) -- (v9)  -- (v10) -- (v1);
\end{tikzpicture} }
\subcaptionbox{\label{subfig:6o}$P_4^{(24)}$}
{ \begin{tikzpicture}[scale=0.8,rotate=90]
\tikzstyle{vertex}=[draw,circle,font=\scriptsize,minimum size=6pt,inner sep=1pt,fill=blue!40!white]
\tikzstyle{edge}=[draw,thick]
\node[vertex] (v1) at (0, 0) {};
\node[vertex] (v4) at (3, 0) {};
\node[vertex] (v8) at (0, -4) {};
\node[vertex] (v6) at (3, -4) {};
\node[vertex] (v2) at (1.5, 0) {};
\node[vertex] (v3) at (2.2, 0) {};
\node[vertex] (v5) at (3, -2.5) {};
\node[vertex] (v7) at (1.9, -4) {};
\node[vertex] (v11) at (0, -1) {};
\node[vertex] (v10) at (0, -2.4) {};
\node[vertex] (v9) at (0, -3.2) {};
\node[vertex] (v12) at (2.6, -3.5) {};
\node[vertex] (v13) at (2.5, -3.0) {};
\node[vertex] (v14) at (1.7, -3.5) {};
\node[vertex] (v15) at (2.0, -3.1) {};
\node[vertex] (v16) at (1.3, -2.8) {};
\node[vertex] (v17) at (1.8, -2.5) {};
\node[vertex] (v18) at (2.4, -2.2) {};
\node[vertex] (v19) at (0.8, -3.0) {};
\node[vertex] (v20) at (0.4, -3.4) {};
\node[vertex] (v21) at (1.1, -2.2) {};
\node[vertex] (v22) at (1.7, -1.8) {};
\node[vertex] (v23) at (2.1, -1.3) {};
\node[vertex] (v24) at (0.3, -1.8) {};
\node[vertex] (v25) at (0.8, -1.7) {};
\node[vertex] (v26) at (0.7, -1.0) {};
\node[vertex] (v27) at (1.0, -0.4) {};
\path[edge] (v26) -- (v27) -- (v1);
\path[edge] (v25) -- (v26) -- (v11);
\path[edge] (v10) -- (v24) -- (v25) -- (v21);
\path[edge] (v22) -- (v2);
\path[edge] (v23) -- (v3);
\path[edge] (v19) -- (v21) -- (v22) -- (v23) -- (v18);
\path[edge] (v9) -- (v20) -- (v19) -- (v16);
\path[edge] (v17) -- (v15);
\path[edge] (v14) -- (v16) -- (v17) -- (v18) -- (v4);
\path[edge] (v7) -- (v14) -- (v15) -- (v13);
\path[edge] (v6) -- (v12) -- (v13) -- (v5);
\path[edge] (v1) -- (v2) -- (v3) -- (v4) -- (v5) -- (v6) -- (v7) -- (v8) -- (v9)  -- (v10)  -- (v11) -- (v1);
\end{tikzpicture} }
\caption{The proto-gadget graphs for the proof of Theorem~\ref{thm:2} for cases $d \in \{12, 16, 20, 24\}$.
The set $P_i^{(d)}$ is used to construct the decorating gadgets $Q_i$, as described in the proof.}
\label{fig:protoGadgetsExtra}
\end{figure}
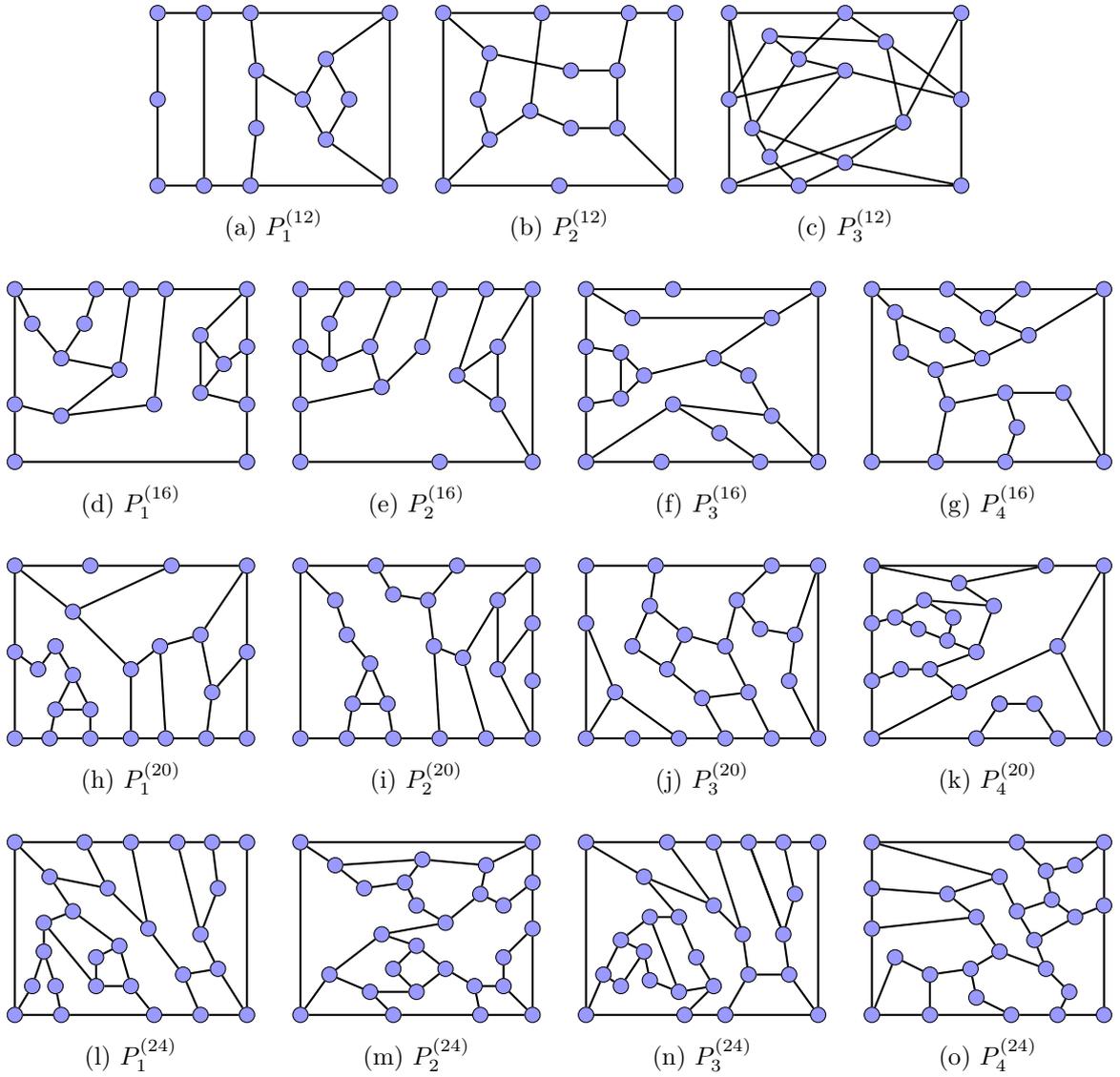
\end{proof}

We note that there are other strategies for the choice of gadgets in the proof.
For example, one may prefer to find one gadget and then, to generate the others, repeatedly apply a construction that preserves
symmetry but does not produce vertices of unwanted degree. One candidate is the so-called Fowler construction \cite{basic2023vertex}.
This approach would lead to a nut graph of yet larger order than the one generated by the present proof.
The order of graph $G$ constructed in the proof of Theorem~\ref{thm:2} is $\omega(d)|V(H)|$, where $\omega(8) = 53$, $\omega(12) = 99$,
$\omega(16) = 161$, $\omega(20) = 241$, and $\omega(24) = 337$. Recall that $H$ denotes a $(d/2)$-regular starting graph that realises $\mathfrak{G}$.

\section{Discussion}

Constructive methods used to answer K{\H o}nig's question typically do not provide minimal examples.
Let $\alpha(\mathfrak{G})$ be the smallest order of the graphs representing the group $\mathfrak{G}$.
Sabidussi~\cite{Sabidussi1959} opened the question by studying the order of $\alpha(\mathfrak{G})$ with respect to $|\mathfrak{G}|$.
The value $\alpha(\mathfrak{G})$ has been determined for various families of groups (see \cite{Arlinghaus1985} for abelian groups and a survey in \cite{Woodruff2016}). 
Babai \cite{Babai1974} gave
$\alpha(\mathfrak{G}) \leq 2| \mathfrak{G}|$ provided that $\mathfrak{G} \notin \{ \mathbb{Z}_3, \mathbb{Z}_4, \mathbb{Z}_5  \}$;
 Deligeorgaki \cite{Deligeorgaki2022} improved this to
$\alpha(\mathfrak{G}) \leq | \mathfrak{G}|$,
with a longer list of exceptions that includes some infinite families.
Planar graphs have also been considered from this point of view (for a survey see \cite{Jones2021}).

Nut graphs raise analogous questions. It is clear that the constructions 
using in proving Theorems~\ref{thm:1} and~\ref{thm:2} are far from minimal.
As an example, consider the group $\mathfrak{G}_{288}$ of order $288$,
defined by its permutation representation
$$\mathfrak{G}_{288} = \langle 
(1,2,3)(4,5)(6,7,8),\ (1,8)(2,7)(3,6)(4,9)(5,10),\ (7,8)
\rangle.$$
In GAP~\cite{GAP4}, this group can be obtained by calling \verb+SmallGroup(288, 889)+.
The smallest $4$-regular graph representing this group that is given by 
Sabidussi's construction (Theorem~\ref{thm:sabidussi}) is of order  $5760$.
Expansion to a nut graph by the construction used in the proof of Theorem~\ref{thm:1}
gives order $109440$.
A much smaller $4$-regular parent graph could have been used as the basis for 
that construction, since the smallest $4$-regular graph representing 
$\mathfrak{G}_{288}$ is of order $11$; see Figure~\ref{fig:7}(a), leading to a nut graph of order $209$.
However,  the database obtained by \verb+nutgen+ \cite{nutgen-site} reveals that 
the smallest nut graph that represents $\mathfrak{G}_{288}$ has only $10$ vertices; see Figure~\ref{fig:7}(b).

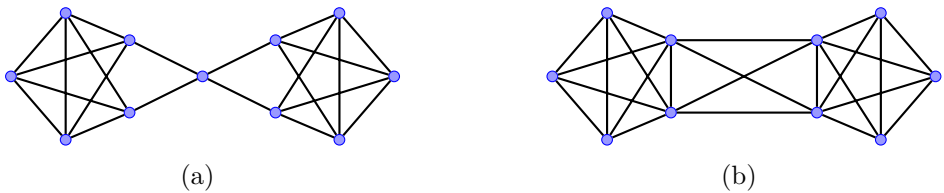
\begin{figure}[!htbp]
\centering
\subcaptionbox{\label{subfig:7a}} {
\begin{tikzpicture}[scale=1.2]
\tikzstyle{vertex}=[draw,circle,font=\scriptsize,minimum size=4pt,inner sep=1pt,color=blue,fill=blue!40!white]
\tikzstyle{edge}=[draw,thick]
\node[vertex] (v1) at (0.2, 0) {};
\node[vertex] (v2) at (0.2, 0.8) {};
\node[vertex] (v3) at (-0.5, -0.3) {};
\node[vertex] (v4) at (-0.5, 1.1) {};
\node[vertex] (v5) at (-1.1, 0.4) {};
\path[edge]  (v2) -- (v3) -- (v4) -- (v5) -- (v1) -- (v3) -- (v5) -- (v2) -- (v4) -- (v1);
\node[vertex] (av1) at (2-0.2, 0) {};
\node[vertex] (av2) at (2-0.2, 0.8) {};
\node[vertex] (av3) at (2+0.5, -0.3) {};
\node[vertex] (av4) at (2+0.5, 1.1) {};
\node[vertex] (av5) at (2+1.1, 0.4) {};
\node[vertex] (central) at (1, 0.4) {};
\path[edge] (av1) -- (central);
\path[edge] (v1) -- (central);
\path[edge] (av2) -- (central);
\path[edge] (v2) -- (central);
\path[edge] (av2) -- (av3) -- (av4) -- (av5) -- (av1) -- (av3) -- (av5) -- (av2) -- (av4) -- (av1);
 \end{tikzpicture} }
\qquad\qquad
\subcaptionbox{\label{subfig:7b}} {
\begin{tikzpicture}[scale=1.2]
\tikzstyle{vertex}=[draw,circle,font=\scriptsize,minimum size=4pt,inner sep=1pt,color=blue,fill=blue!40!white]
\tikzstyle{edge}=[draw,thick]
\node[vertex] (v1) at (0.2, 0) {};
\node[vertex] (v2) at (0.2, 0.8) {};
\node[vertex] (v3) at (-0.5, -0.3) {};
\node[vertex] (v4) at (-0.5, 1.1) {};
\node[vertex] (v5) at (-1.1, 0.4) {};
\path[edge] (v1) -- (v2) -- (v3) -- (v4) -- (v5) -- (v1) -- (v3) -- (v5) -- (v2) -- (v4) -- (v1);
\node[vertex] (av1) at (2-0.2, 0) {};
\node[vertex] (av2) at (2-0.2, 0.8) {};
\node[vertex] (av3) at (2+0.5, -0.3) {};
\node[vertex] (av4) at (2+0.5, 1.1) {};
\node[vertex] (av5) at (2+1.1, 0.4) {};
\path[edge] (av1) -- (v1) -- (av2) -- (v2) -- (av1);
\path[edge] (av1) -- (av2) -- (av3) -- (av4) -- (av5) -- (av1) -- (av3) -- (av5) -- (av2) -- (av4) -- (av1);
 \end{tikzpicture} }
\caption{(a) The smallest $4$-regular graph, and (b) the smallest nut graph, that represent $\mathfrak{G}_{288}$.}
\label{fig:7}
\end{figure}

Let $\beta(\mathfrak{G})$ be the smallest order of the nut graphs representing the group $\mathfrak{G}$.
It is evident that $\beta(\mathfrak{G}) \geq \alpha(\mathfrak{G})$. For groups up to order $6$, the values are
\begin{align*}
\alpha(\mathbb{Z}_1) & = 1, \beta(\mathbb{Z}_1) = 9; & 
\alpha(\mathbb{Z}_2) & = 2, \beta(\mathbb{Z}_2) = 8; & 
\alpha(\mathbb{Z}_3) & = 9, \beta(\mathbb{Z}_3) = 11; \\
\alpha(\mathbb{Z}_4) & = 10, \beta(\mathbb{Z}_4) = 11; &
\alpha(\mathbb{Z}_2 \times \mathbb{Z}_2) & = 4, \beta(\mathbb{Z}_2 \times \mathbb{Z}_2 ) = 7; &
\alpha(\mathbb{Z}_5) & = 15, \beta(\mathbb{Z}_5) = 15; \\ 
\alpha(\mathbb{Z}_3 \times \mathbb{Z}_2 ) & = 11, \beta(\mathbb{Z}_3 \times \mathbb{Z}_2 ) = 11; &
\alpha(\mathbb{Z}_3 \rtimes \mathbb{Z}_2 ) &= 3, \beta(\mathbb{Z}_3 \rtimes \mathbb{Z}_2 ) = 7.
\end{align*}
These numbers were found by computer search of the available censuses of nut graphs \cite{hog,HoG2}. For $\mathbb{Z}_5$,
\cite[Lemma 5.2]{Arlinghaus1985} gives us $\alpha(\mathbb{Z}_5) = 15 \leq \beta(\mathbb{Z}_5)$. The equality $\beta(\mathbb{Z}_5) = 15$
was established by finding an example.

\begin{problem}
\label{problem:2}
Given any finite group $\mathfrak{G}$, find a nut graph $G$ of minimum order, such that $\Aut(G) \cong \mathfrak{G}$.
 Find an upper bound on $\beta(\mathfrak{G})$ in terms of $|\mathfrak{G}|$.
\end{problem}

Another question relates to the degrees of regular nut graphs that represent groups $\mathfrak{G}$.

\begin{problem}
\label{problem:1}
Given a finite group $\mathfrak{G}$ and an integer $d \geq 3$, find a $d$-regular nut graph $G$, such that $\Aut(G) \cong \mathfrak{G}$.
\end{problem}

Theorem~\ref{thm:2} supplies the answer for 
small cases $d \equiv 0 \pmod{4}$, and the same technique could be used to extend the list, 
but this still leaves unresolved all cases with $d \not\equiv 0 \pmod{4}$.
A missing case of particular interest is $d =3$.
The graphs that can be used to model conjugated carbon frameworks in H\"{u}ckel theory and similar applications~\cite{Streitwieser1961}
are known as chemical graphs. A \emph{chemical graph} in this definition is connected and subcubic.
Cubic chemical graphs form an important subclass that includes the fullerenes \cite{fowler2007atlas}. Applications of 
nut graphs in theories of radical chemistry and molecular conduction are described in \cite{nuttybook}.
Interestingly, the eponymous Frucht graph, introduced in \cite{Frucht1949}
as a small graph that has trivial symmetry, is both cubic (in fact polyhedral) and a nut graph.
It has order $12$ and is the smallest cubic nut graph of trivial symmetry.
 Frucht also threated the group $\mathbb{Z}_2$ separately;
his graph that realises this group (see \cite[Fig.~2]{Frucht1949}) is not a nut graph.
It is straightforward to show that his general constructions~\cite{Frucht1949} for groups of order greater than $2$
do not yield nut graphs. The repeated motifs devised by Frucht (the `corners' \cite{Frucht1949}) give rise to at least one non-trivial kernel eigenvector with 
some zero entries in the constructed graph.
Hence, it would be interesting to find a construction that yields cubic nut graphs directly.

\section*{Acknowledgements}
We would like to thank our colleague Prof.~Primož Potočnik for fruitful discussion and for drawing our attention
to \cite{Frucht1949} during a research visit to Sheffield.
The work of Nino Bašić is supported in part by the Slovenian Research Agency (research program P1-0294
and research projects N1-0140 and J1-2481).
PWF thanks the Leverhulme Trust for an
Emeritus Fellowship on the theme of
{\lq Modelling molecular currents, conduction and aromaticity\rq} and the Francqui Foundation for the award of
an International Francqui Professorship.

\nocite{GitHub}

\bibliographystyle{amcjoucc}
\bibliography{references}

\end{document}